\DeclareMathOperator{\DIAG}{diag}
\DeclareMathOperator{\DIF}{d\!}
\DeclareMathOperator{\IM}{im}
\DeclareMathOperator{\COL}{col}
\DeclareMathOperator{\KER}{ker}
\DeclareMathOperator*{\MIN}{minimize}
\DeclareMathOperator{\TR}{tr}
\DeclarePairedDelimiter{\card}{\lvert}{\rvert}
\DeclarePairedDelimiterXPP{\twonorm}[1]{\sigma_1}{\lparen}{\rparen}{}{#1}
\DeclarePairedDelimiterXPP{\Fnorm}[1]{}{\lVert}{\rVert}{_F}{#1}
\DeclarePairedDelimiterXPP{\Htwonorm}[1]{}{\lVert}{\rVert}{_{\cH_2}}{#1}
\DeclarePairedDelimiterXPP{\Hinfnorm}[1]{}{\lVert}{\rVert}{_{\cH_\infty}}{#1}
\DeclarePairedDelimiterXPP{\Hpnorm}[1]{}{\lVert}{\rVert}{_{\cH_p}}{#1}
\DeclarePairedDelimiter{\PAREN}{\lparen}{\rparen}
\DeclarePairedDelimiter{\BRACK}{\lbrack}{\rbrack}
\newcommand{\aeptxt}{\mathrm{AEP}}
\newcommand{\R}{\mathbb{R}}
\newcommand{\Rnn}{\R^{n \times{}n}}
\newcommand{\Rpp}{\R^{p \times{}p}}
\newcommand{\Rnr}{\R^{n \times{}r}}
\newcommand{\Rrr}{\R^{r \times{}r}}
\newcommand{\cA}{\mathcal{A}}
\newcommand{\cG}{\mathcal{G}}
\newcommand{\cH}{\mathcal{H}}
\newcommand{\cP}{\mathcal{P}}
\newcommand{\cV}{\mathcal{V}}
\newcommand{\cX}{\mathcal{X}}
\newcommand{\bbone}{\mathds{1}}
\newcommand{\Serr}{\Delta}
\newcommand{\graph}{\cG}
\newcommand{\vlf}{\cV}
\newcommand{\vl}{\vlf_\mathrm{L}}
\newcommand{\vf}{\vlf_\mathrm{F}}
\newcommand{\adjmat}{\cA}
\newcommand{\PTP}{\PAREN[\big]{P^T P}}
\newcommand{\half}{\frac{1}{2}}
\theoremstyle{plain}
\newtheorem{TM}{Theorem}
\newtheorem{PROP}[TM]{Proposition}
\newtheorem{LM}[TM]{Lemma}
\theoremstyle{definition}
\newtheorem{DF}[TM]{Definition}
\theoremstyle{remark}
\newtheorem{RM}[TM]{Remark}
\let\le\leq
\let\ge\geq
\let\preceq\leq
\let\succeq\geq
\let\widehat\hat
\let\cdot\times
\let\subseteq\subset
\begin{document}
\title{Model Reduction of Linear Multi-Agent Systems by Clustering and
Associated $\cH_2$- and $\cH_\infty$-Error Bounds}

\author{Hidde-Jan~Jongsma%
  \thanks{H.-J. Jongsma and H.~L. Trentelman are with the Johann Bernoulli
  Institute for Mathematics and Computer Science, University of Groningen,
  Groningen, The Netherlands. E-mails: \protect\url{{h.jongsma,
  h.l.trentelman}@rug.nl}.}
  \and
  Petar~Mlinari\'{c}%
  \thanks{P. Mlinari\'{c}, S. Grundel, and P. Benner are with the Max Planck
  Institute for Dynamics of Complex Technical Systems, Sandtorstr.\ 1, 39106
  Magdeburg, Germany. E-mails: \protect\url{{mlinaric, grundel,
  benner}@mpi-magdeburg.mpg.de}.
  P. Mlinari\'{c} is also affiliated to the ``International Max Planck Research
  School (IMPRS) for Advanced Methods in Process and System Engineering
  (Magdeburg)''.}
  \and
  Sara~Grundel\footnotemark[2]
  \and
  Peter~Benner\footnotemark[2]
  \and
  Harry~L.~Trentelman\footnotemark[1]
}

\date{}

\maketitle

\begin{abstract}
In this paper, we study a model reduction technique for leader-follower
networked multi-agent systems defined on weighted, undirected graphs with
arbitrary linear multivariable agent dynamics. In the network graph of this
network, nodes represent the agents and edges represent communication links
between the agents.  Only the leaders in the network receive an external input,
the followers only exchange information with their neighbors. The reduced
network is obtained by partitioning the set of nodes into disjoint sets, called
clusters, and associating with each cluster a single, new, node in a reduced
network graph.  The resulting reduced network has a weighted, symmetric,
directed network graph, and inherits some of the structure of the original
network. We establish a priori upper bounds on the $\cH_2$ and $\cH_\infty$
model reduction error for the special case that the graph partition is almost
equitable. These upper bounds depend on the Laplacian eigenvalues of the
original and reduced network, an auxiliary system associated with the agent
dynamics, and the number of nodes that belong to the same clusters as the
leaders in the network. Finally, we consider the problem of obtaining a priori
upper bounds if we cluster using arbitrary, possibly non almost equitable,
partitions.
\end{abstract}

\noindent\emph{AMS Subject Classification:} 93C05, 93A15, 94C15

\section{Introduction}

In the last few decades, the world has become increasingly connected. This has
brought a significant interest to fields such as complex networks, smart-grids,
distributed systems, transportation networks, biological networks, and networked
multi-agent systems, see e.g.~\cite{New10, EstFHetal10, BenFFetal14}. Widely
studied problems in networked systems are the problems of consensus and
synchronization, see~\cite{Mor05, OlfM03, LiDCetal10, MaZ10}. In the consensus
problem, the goal is to have the agents in the network reach agreement on
certain physical or measured quantities depending on the states of all the
agents, where the agents use only locally available information. Other important
subjects in the theory of networked systems are flocking, formation control,
sensor placement, and controllability of networks, see e.g.~\cite{FaxM04,
CorMKetal02, Olf06, EgeMCetal12, GaoLDetal14, MonCT15}.

A networked multi-agent system is a network consisting of multiple
interconnected dynamical systems called \emph{agents}. The interconnection
topology of the network is modeled by a graph called the \emph{network graph},
which specifies for each agent its neighbors in the network. In this graph, the
agents are represented by \emph{nodes} and the communication links are
represented by \emph{edges}. In the network, the agents exchange relative state
or output information with their neighbors. Depending on the problem, the
network graph can be weighted or unweighted, and directed or undirected. In the
present paper, the original networks are assumed to have weighted, undirected
network graphs.

Behavioral analysis and controller design for large-scale complex networks can
potentially become extremely expensive from a computational point of view,
especially for problems where the complexity of the network scales as a power of
the number of nodes it contains. In order to tackle this problem, there is a
need for methods and procedures to approximate the original networks by smaller,
less complex ones.

Direct application of established model reduction techniques, such as balanced
truncation, Hankel-norm approximation, and Krylov subspace methods, see
e.g.~\cite{morAnt05, morBenMS05}, to the dynamical models of networked systems
generally leads to a collapse of the network structure, as well as the loss of
important properties such as consensus. Furthermore, the resulting reduced
models often cannot even be interpreted as networked systems anymore.

While there do exist structure-preserving techniques which preserve certain
properties such as the Lagrangian structure~\cite{morLalKM03}, the second order
structure~\cite{morLiB04, morBenKS13}, and the interconnection structure of
interconnected subsystems~\cite{morReiS07, morVanV08, morSanM09}, multi-agent
systems possess their own specific internal structure: the topology of the
network. In the past, model reduction techniques specifically for networked
multi-agent systems have been proposed in~\cite{morImu12, morIshKIetal12a,
morBesSJ14, morMliGB15}. These methods are based on clustering nodes in the
network. With clustering, the idea is to partition the set of nodes in the
network graph into disjoint sets called clusters, and to associate with each
cluster a single, new, node in the reduced network, thus reducing the number of
nodes and connections and the complexity of the network topology.  Other
techniques instead reduce the network topology in a different manner, for
instance by removing connections in the network graph that are of lesser
importance, see e.g.~\cite{morJonTC15}.

In~\cite{morMonTC14} a model reduction technique was introduced that harnesses a
specific class of graph partitions called \emph{almost equitable partitions}.
The results in~\cite{morMonTC14} provide explicit expressions for the $\cH_2$
model reduction error if a \emph{leader-follower network} with single integrator
agent dynamics is clustered according to an almost equitable partition of the
network graph. In a leader-follower network, a subset of the nodes receive an
external input. These nodes are called the \emph{leaders} of the network. The
other nodes only receive relative information from their neighbors in the graph,
these are called the \emph{followers}. In the present paper, we extend the
results in~\cite{morMonTC14} to networks where the agent dynamics is given by an
arbitrary multivariable input-state-output system. We provide a priori upper
bounds on both the $\cH_2$ and the $\cH_\infty$ model reduction errors if the
agents are clustered according to almost equitable partitions. Compared
to~\cite{morMonTC14}, we use a slightly different output equation to measure the
disagreement between the agents in the network, which enables us to also
consider the problem of clustering a network according to arbitrary, not
necessarily almost equitable, graph partitions.

The outline of this paper is as follows. In Section~\ref{sec:prelim} we
introduce some notation and review the theory needed for computing the $\cH_2$
and $\cH_\infty$ model reduction error bounds in the remainder of the paper. In
Section~\ref{sec:problem} we precisely formulate the problem of model reduction
of leader-follower networks with arbitrary agent dynamics.
Section~\ref{sec:partitions} reviews the needed theory on graph partitions and
introduces the reduced network, obtained by applying a Petrov-Galerkin
projection to the dynamical system of the original network. In
Section~\ref{sec:H_2} we provide a priori error bounds on the $\cH_2$ model
reduction error for networks with arbitrary agent dynamics, clustered according
to almost equitable partitions. In Section~\ref{sec:H_inf}, we complement these
results by providing upper bounds on the $\cH_\infty$ model reduction error. In
Section~\ref{sec:general} the problem of clustering networks according to
general partitions is considered and the first steps towards a priori error
bounds on both the $\cH_2$ and $\cH_\infty$ model reduction errors are made.
Finally, Section~\ref{sec:conclusions} provides some conclusions.

\section{Preliminaries}\label{sec:prelim}

The trace of a square matrix $A$ is denoted $\TR(A)$ and is the sum of the
diagonal entries of $A$. For matrices $A$, $B$, and $C$ of appropriate
dimensions such that $A B C$ is square, the trace of $A B C$ satisfies
\begin{equation*}
  \TR(A B C) = \TR(C A B) = \TR(B C A).
\end{equation*}
The largest singular value of a matrix $A$ is denoted $\twonorm{A}$ and
satisfies $\twonorm{A} = \lambda_{\max}{(A^T A)}^{\frac{1}{2}}$. For given real
numbers $\alpha_1, \alpha_2, \ldots, \alpha_k$, let $\DIAG(\alpha_1, \alpha_2,
\ldots, \alpha_k)$ denote the $k \times k$ diagonal matrix with the $\alpha_i$'s
on the diagonal. In the case of a collection of square matrices $A_1, A_2,
\ldots, A_k$, we use $\DIAG(A_1, A_2, \ldots, A_k)$ to denote the block diagonal
matrix with the $A_i$'s as diagonal blocks. For a rectangular matrix $A$, let
$A^+$ denote its Moore-Penrose pseudoinverse.

Consider the input-state-output system
\begin{equation}\label{eq:simple-system}
  \begin{aligned}
    \dot{x} &= A x + B u, \\
    y &= C x,
  \end{aligned}
\end{equation}
with $x \in \R^n$, $u \in \R^m$, $y \in \R^p$, and transfer function $S(s) = C
{(s I - A)}^{-1} B$. If $S$ has all its poles in the open left half complex
plane, then we define its $\cH_2$-norm by
\begin{equation*}
  \Htwonorm{S}^2
  :=
  \frac{1}{2 \pi}
  \int^{+\infty}_{-\infty} \TR\PAREN*{{S(-i \omega)}^T S(i \omega)}
  \DIF{\omega}.
\end{equation*}
It is well known that if $A$ is Hurwitz, then the $\cH_2$-norm can be computed
as
\begin{equation*}
  \Htwonorm{S}^2
  =
  \TR\PAREN*{B^T X B},
\end{equation*}
where $X$ is the unique positive semi-definite solution of the Lyapunov equation
\begin{equation}\label{eq:lyap}
  A^T X + X A + C^T C = 0.
\end{equation}
For the purposes of this paper, we also need to deal with the situation when $A$
is not Hurwitz. Let ${\cX}_+(A)$ denote the generalized unstable subspace of
$A$, i.e., the direct sum of the generalized eigenspaces of $A$ corresponding to
its eigenvalues in the closed right half plane. We state the following proposition:
\begin{PROP}\label{prop:Lyapunov}
  Assume that ${\cX}_+(A) \subset \ker C$. Then the Lyapunov
  equation~\eqref{eq:lyap} has at least one positive semi-definite solution.
  Among all positive semi-definite solutions, there is exactly one solution, say
  $X$, with the property ${\cX}_+(A) \subset \ker X$. For this particular
  solution $X$ we have $\Htwonorm{S}^2 = \TR\PAREN*{B^T X B}$.
\end{PROP}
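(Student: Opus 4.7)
The plan is to reduce to the Hurwitz case by decomposing along the generalized stable/unstable splitting. Let $\cX_+ = \cX_+(A)$ and let $\cX_-$ be the complementary $A$-invariant subspace given by the generalized eigenspaces of $A$ for the eigenvalues in the open left half plane, so that $\R^n = \cX_+ \oplus \cX_-$. Choose a basis adapted to this splitting. In this basis, $A$ becomes block diagonal,
\begin{equation*}
  A = \DIAG(A_+, A_-),
\end{equation*}
where the spectrum of $A_+$ lies in the closed right half plane and $A_-$ is Hurwitz. Split $B = \begin{pmatrix} B_+^T & B_-^T \end{pmatrix}^T$ conformally, and use the hypothesis $\cX_+ \subset \ker C$ to conclude that $C = \begin{pmatrix} 0 & C_- \end{pmatrix}$ in the same basis.

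Next, I would write any candidate symmetric $X$ in block form with blocks $X_{11}, X_{12}, X_{22}$ and expand~\eqref{eq:lyap} into three equations. The $(2,2)$-block reads $A_-^T X_{22} + X_{22} A_- + C_-^T C_- = 0$, which has a unique positive semi-definite solution $X_{22}^\star$ since $A_-$ is Hurwitz. The $(1,2)$-block is the Sylvester equation $A_+^T X_{12} + X_{12} A_- = 0$; since the spectra of $A_+^T$ and $-A_-$ are disjoint (the first lies in the closed right half plane, the second in the open right half plane), standard Sylvester theory forces $X_{12} = 0$. The $(1,1)$-block is $A_+^T X_{11} + X_{11} A_+ = 0$, which admits in general many positive semi-definite solutions, e.g.\ $X_{11} = 0$. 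Thus the family of positive semi-definite solutions of~\eqref{eq:lyap} is exactly $\DIAG(X_{11}, X_{22}^\star)$ with $X_{11}\succeq 0$ solving the homogeneous Lyapunov equation for $A_+$; in particular such solutions exist.

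For the uniqueness claim, I would observe that $\cX_+ \subset \ker X$ is equivalent, in the chosen basis, to the first block column of $X$ being zero; combined with symmetry this forces $X_{11} = 0$ and $X_{12} = 0$. Hence the only positive semi-definite solution with this kernel property is $X = \DIAG(0, X_{22}^\star)$.

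Finally, for the $\cH_2$-norm, I would compute the transfer function in the block basis,
\begin{equation*}
  S(s)
  = \begin{pmatrix} 0 & C_- \end{pmatrix}
    \DIAG\PAREN*{(sI - A_+)^{-1}, (sI - A_-)^{-1}}
    \begin{pmatrix} B_+ \\ B_- \end{pmatrix}
  = C_- {(sI - A_-)}^{-1} B_-,
\end{equation*}
so $S$ is the transfer function of the Hurwitz system $(A_-, B_-, C_-)$ and the classical formula gives $\Htwonorm{S}^2 = \TR\PAREN*{B_-^T X_{22}^\star B_-}$. Since $X = \DIAG(0, X_{22}^\star)$, a direct block multiplication yields $B^T X B = B_-^T X_{22}^\star B_-$, whose trace is the desired quantity. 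The only step requiring care is verifying that $A_+$ and $-A_-$ have disjoint spectra so that the Sylvester block argument really pins down $X_{12} = 0$; the decomposition itself is standard and everything else is bookkeeping.
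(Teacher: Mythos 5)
Your decomposition is exactly the paper's: split $\R^n = \cX_+(A)\oplus \cX_-(A)$, put $A$, $B$, $C$ in block form with $C$ vanishing on the unstable block, solve the reduced Hurwitz Lyapunov equation to get $X_{22}^\star$, and note that $S$ only sees the stable subsystem. The existence argument (exhibit $\DIAG(0,X_{22}^\star)$), the uniqueness argument (the kernel condition $\cX_+(A)\subset\ker X$ kills the first block row and column, reducing the equation to the $(2,2)$ block), and the $\cH_2$ computation are all correct and coincide with the paper's proof.

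However, the one step you flag as ``requiring care'' is justified incorrectly. The Sylvester equation $A_+^T X_{12}+X_{12}A_-=0$ forces $X_{12}=0$ only if $\sigma(A_+^T)\cap\sigma(-A_-)=\emptyset$, and you claim this disjointness holds because the first set lies in the closed right half plane and the second in the open right half plane. Both sets lie in the right half plane, so they can certainly intersect: take $A_+=1$ and $A_-=-1$, for which the $(1,2)$-block equation reads $X_{12}-X_{12}=0$ and every $X_{12}$ solves it. Hence your classification of \emph{all} solutions as $\DIAG(X_{11},X_{22}^\star)$ is not established by the argument given (for merely symmetric solutions it is in fact false in this example; for positive semi-definite ones it would need a separate range/kernel argument). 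Fortunately this classification is never needed for the proposition: existence needs only the one explicit solution, and uniqueness is asserted only within the subclass satisfying $\cX_+(A)\subset\ker X$, where $X_{11}=0$ and $X_{12}=0$ follow directly from the kernel condition and symmetry, with no Sylvester reasoning. Delete the disjoint-spectra digression and your proof stands, and is then essentially identical to the paper's.
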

\begin{proof}
  Without loss of generality, assume that
  \begin{equation*}
    A = \begin{pmatrix}A_{-} & 0 \\ 0 & A_{+}\end{pmatrix}, \quad
    B = \begin{pmatrix}B_{-} \\ B_{+}\end{pmatrix}, \quad
    C = \begin{pmatrix}C_{-} & 0\end{pmatrix},
  \end{equation*}
  where $A_{-}$ is Hurwitz, and $A_{+}$ has all its eigenvalues in the closed
  right half plane. Let $X_{-}$ be the unique solution to the reduced Lyapunov
  equation
  \begin{equation}\label{eq:lyap-reduced}
    A_{-}^T X_{-} + X_{-} A_{-} + C_{-}^T C_{-} = 0.
  \end{equation}
  Then $X_{-} = \int_{0}^{\infty} e^{A_{-}^T t} C_{-}^T C_{-} e^{A_{-} t} \ dt
  \geq 0$. Obviously then, $X = \DIAG(X_{-}, 0)$ is a positive semi-definite
  solution of~\eqref{eq:lyap}. Now let $X$ be a positive semi-definite solution
  to~\eqref{eq:lyap} with the property that $\cX_{+}(A) \subset \ker X$. Then
  $X$ must be of the form $X = \DIAG(X_1, 0)$, and $X_1$ must satisfy the
  reduced Lyapunov equation~\eqref{eq:lyap-reduced}. Thus $X = \DIAG(X_{-}, 0)$.
  Finally, $S$ is stable since $\cX_{+}(A) \subset \ker C$. Moreover,
  \begin{align*}
    \Htwonorm{S}^2 &= \TR \PAREN*{B^T \int_0^\infty e^{A^T t} C^T C e^{A t} \DIF{t} \ B} \\
                 &= \TR \PAREN*{B_{-}^T \int_0^\infty e^{A_{-}^T t} C_{-}^T C_{-} e^{A_{-} t} \DIF{t} \ B_{-}} \\
                 &= \TR \PAREN*{B_{-}^T X_{-} B_{-}} \\
                 &= \TR \PAREN*{B^T X B}.
  \end{align*}
\end{proof}
If $S$ has all its poles in the open left half plane, then the $\cH_\infty$-norm
of $S$ is defined by
\begin{equation*}
  \Hinfnorm{S} := \sup_{\omega \in \R} \twonorm{S(i \omega)}.
\end{equation*}
We will now deal with computing the $\cH_{\infty}$-norm of a stable transfer
function. The result is a generalization of Lemma~4 in~\cite{morIshKIetal14}.
\begin{LM}\label{LM:suff_cond}
  Consider the system~\eqref{eq:simple-system}. Assume that its transfer
  function $S$ has all its poles only in the open left half plane. If there
  exists $X \in \Rpp$ such that $X = X^T$ and $C A = X C$, then $\Hinfnorm{S} =
  \twonorm{S(0)}$.
\end{LM}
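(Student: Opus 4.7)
The plan is to use the intertwining relation $CA = XC$ to collapse the transfer function of $S$ to a system whose ``dynamics'' is the real symmetric matrix $X$. Observing that
\begin{equation*}
  (sI - X) C = sC - XC = sC - CA = C(sI - A),
\end{equation*}
one gets $C(sI - A)^{-1} = (sI - X)^{-1} C$ on the set where $sI - X$ is invertible, and consequently
\begin{equation*}
  S(s) = (sI - X)^{-1}(CB).
\end{equation*}
Because $X = X^T$ has purely real spectrum, this identity holds at every $s = i\omega$ with at most the single exception $\omega = 0$.

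To cover $\omega = 0$ and prepare the final estimate, I would iterate $CA = XC$ to obtain $CA^k = X^k C$ for all $k \ge 0$, and hence $C e^{A t} B = e^{X t} (CB)$. Since $A$ is Hurwitz the left-hand side decays to zero as $t \to \infty$; expanding the right-hand side in an orthonormal eigenbasis of $X$ then forces every spectral component of $CB$ corresponding to a non-negative eigenvalue of $X$ to vanish. In particular, $S(s) = (sI - X)^{-1}(CB)$ extends continuously across any singularities of $(sI - X)^{-1}$ on the imaginary axis.

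With the spectral decomposition $X = U \Lambda U^T$, $\Lambda = \DIAG(\lambda_1, \dots, \lambda_p)$ real, a direct computation then gives
\begin{equation*}
  S(i\omega)^{*} S(i\omega) = (CB)^T U \DIAG\PAREN*{\tfrac{1}{\omega^2 + \lambda_1^2}, \dots, \tfrac{1}{\omega^2 + \lambda_p^2}} U^T (CB).
\end{equation*}
The spectral components of $CB$ just identified as zero annihilate any singular diagonal entries at $\lambda_i \ge 0$; for the surviving, strictly negative $\lambda_i$ the scalar $1/(\omega^2 + \lambda_i^2)$ is maximized at $\omega = 0$ with value $1/\lambda_i^2$. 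Hence $S(i\omega)^{*} S(i\omega) \preceq S(0)^T S(0)$ in the Loewner order, and passing to the largest eigenvalue yields $\twonorm{S(i\omega)} \le \twonorm{S(0)}$ with equality at $\omega = 0$; taking the supremum over $\omega$ then completes the argument.

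The main delicacy I anticipate is that $CA = XC$ does not by itself force $X$ to be Hurwitz — $X$ may carry zero or positive eigenvalues at which $(sI - X)^{-1}$ blows up on the imaginary axis — so the decay step above is really doing the work of showing, via stability of $S$, that those problematic eigenvalues of $X$ are invisible to $CB$, which is what makes both the representation of $S$ and the pointwise Loewner comparison valid uniformly in $\omega$.
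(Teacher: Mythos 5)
Your proof is correct, but it takes a genuinely different route from the paper's. The paper argues via the Bounded Real Lemma: it first reduces to a minimal realization through a Kalman decomposition (checking that the intertwining condition $CA = XC$ survives the state transformation), and then certifies $\Hinfnorm{S} \le \gamma$ with $\gamma = \twonorm{S(0)}$ by exhibiting the explicit symmetric solution $P = -A^{-T}C^T X C A^{-1}$ of the associated matrix inequality. You instead push the dynamics to the output side: $CA = XC$ gives $C(sI-A)^{-1} = (sI-X)^{-1}C$, hence $S(s) = (sI-X)^{-1}CB$, and the symmetry of $X$ yields the explicit frequency-domain formula $S(i\omega)^{*}S(i\omega) = (CB)^T U \DIAG\PAREN[\big]{1/(\omega^2+\lambda_1^2),\ldots,1/(\omega^2+\lambda_p^2)} U^T (CB)$, which is maximized in the Loewner order at $\omega = 0$ once the spectral components of $CB$ along the nonnegative eigenvalues of $X$ are shown to vanish. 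Your argument is more elementary (no LMI machinery) and handles non-minimal realizations automatically, at the price of the impulse-response decay step needed to neutralize the eigenvalues $\lambda_i \ge 0$; the paper's argument needs the Kalman decomposition but avoids any spectral analysis of $X$. One small repair: you justify the decay of $Ce^{At}B$ by ``$A$ is Hurwitz,'' which is not among the hypotheses and can fail for non-minimal realizations. The correct justification --- which your closing remark in fact points to --- is that $Ce^{At}B$ is the inverse Laplace transform of $S$, a strictly proper rational matrix with all poles in the open left half plane, and therefore decays to zero. With that substitution the argument is complete.
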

\begin{proof}
  For the first part of the proof, let us assume that $(A, B, C)$ is minimal.
  Then, in particular, $A$ is a Hurwitz matrix and $(A, B)$ is controllable.

  Clearly, the inequality $\Hinfnorm{S} \ge \twonorm{S(0)}$ is always satisfied.
  We will prove that $\Hinfnorm{S} \le \twonorm{S(0)}$ using the Bounded Real
  Lemma~\cite{Ran96}, which states that $\Hinfnorm{S} \le \gamma$ if and only if
  there exists $P \in \Rnn$ such that $P = P^T$ and
  \begin{align*}
    A^T P + P A + C^T C + \frac{1}{\gamma^2} P B B^T P \preceq 0.
  \end{align*}
  Let us take $\gamma = \twonorm{S(0)} = \twonorm{C A^{-1} B}$. This implies
  that
  \begin{align}
    C A^{-1} B B^T A^{-T} C^T \preceq \gamma^2 I_p. \label{eq:gamma}
  \end{align}
  Defining $P := -A^{-T} C^T X C A^{-1}$ and using~\eqref{eq:gamma} gives us
  \begin{align*}
    A^T P + P A & + C^T C + \frac{1}{\gamma^2} P B B^T P \\
                & = -C^T X C A^{-1} - A^{-T} C^T X C + C^T C \\
                & \qquad + \frac{1}{\gamma^2} A^{-T} C^T X C A^{-1} B B^T A^{-T}
                  C^T X C A^{-1} \\
                & \leq -C^T X C A^{-1} - A^{-T} C^T X C + C^T C
                  + A^{-T} C^T X X CA^{-1} \\
                & = \PAREN{X C A^{-1} - C}^T \PAREN{X C A^{-1} - C} \\
                & = 0.
  \end{align*}
  From the Bounded Real Lemma, we conclude that $\Hinfnorm{S} \le
  \twonorm{S(0)}$.

  For a non-minimal representation $(A, B, C)$, applying the Kalman
  decomposition, let $T$ be a nonsingular matrix such that
  \begin{align*}
    T^{-1} A T & =
    \begin{pmatrix}
      A_1 & 0 & A_6 & 0 \\
      A_2 & A_3 & A_4 & A_5 \\
      0 & 0 & A_7 & 0 \\
      0 & 0 & A_8 & A_9
    \end{pmatrix},
    \quad T^{-1} B =
    \begin{pmatrix}
      B_1 \\
      B_2 \\
      0 \\
      0
    \end{pmatrix},
    \quad
    C T =
    \begin{pmatrix}
      C_1 & 0 & C_2 & 0
    \end{pmatrix},
  \end{align*}
  where $(A_1, B_1, C_1)$ is a minimal representation of $(A, B, C)$ with $A_1$
  Hurwitz. Obviously,
  \begin{align*}
    (C T) \PAREN{T^{-1} A T}
    = C A T
    = X C T
    = X (C T),
  \end{align*}
  thus the condition is preserved under system transformation. From this, it
  follows that $C_1 A_1 = X C_1$. Therefore, the minimal representation satisfies the
  sufficient condition and using the result obtained above the proof is
  completed.
\end{proof}

Continuing our effort to compute the $\cH_{\infty}$-norm, we formulate a lemma
that will be instrumental in evaluating a transfer function at the origin.
Recall that for a given matrix $A$, its Moore-Penrose inverse is denoted by
$A^+$.
\begin{LM}\label{LM:H0}
  Consider the system~\eqref{eq:simple-system}. If $A$ is symmetric and
  $\KER{A} \subseteq \KER{C}$, then $0$ is not a pole of the transfer function
  $S$ and we have $S(0) = -C A^+ B$.
\end{LM}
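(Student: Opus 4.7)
The plan is to exploit the symmetry of $A$ via its spectral decomposition and then read off both claims from the resulting resolvent formula. Since $A = A^T$, there is an orthogonal matrix $U = \begin{pmatrix} U_0 & U_1 \end{pmatrix}$ such that the columns of $U_0$ span $\KER A$ and the columns of $U_1$ span $(\KER A)^\perp = \IM A$, and
\begin{equation*}
  A = U_1 \Lambda_1 U_1^T,
\end{equation*}
where $\Lambda_1$ is a diagonal matrix containing the nonzero eigenvalues of $A$. From this one obtains the resolvent expansion
\begin{equation*}
  (s I - A)^{-1} = \frac{1}{s}\, U_0 U_0^T + U_1 (s I - \Lambda_1)^{-1} U_1^T,
\end{equation*}
valid for every $s$ that is neither $0$ nor an eigenvalue of $\Lambda_1$.

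Next, I would use the hypothesis $\KER A \subseteq \KER C$ to remove the singular term. Since every column of $U_0$ lies in $\KER A$, it also lies in $\KER C$, so $C U_0 = 0$. Multiplying the resolvent formula on the left by $C$ and on the right by $B$ therefore yields
\begin{equation*}
  S(s) = C (s I - A)^{-1} B = C U_1 (s I - \Lambda_1)^{-1} U_1^T B,
\end{equation*}
which is analytic at $s = 0$ because $\Lambda_1$ is invertible. This proves that $0$ is not a pole of $S$.

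Finally, evaluating at $s = 0$ gives $S(0) = -C U_1 \Lambda_1^{-1} U_1^T B$, and the proof concludes by identifying this expression with $-C A^+ B$ using the well-known formula for the Moore-Penrose pseudoinverse of a symmetric matrix, namely $A^+ = U_1 \Lambda_1^{-1} U_1^T$ (this follows directly from checking the four Moore-Penrose conditions on the decomposition $A = U_1 \Lambda_1 U_1^T$). I do not expect any real obstacle here: the only subtle point is making the pseudoinverse identification precise, but this is standard once the spectral decomposition is in place and the hypothesis $\KER A \subseteq \KER C$ has been used to eliminate the $1/s$ term.
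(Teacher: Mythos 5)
Your proof is correct and follows essentially the same route as the paper's: an orthogonal eigenvalue decomposition of the symmetric $A$ separating $\KER A$ from its orthogonal complement, the hypothesis $\KER A \subseteq \KER C$ to annihilate the $1/s$ part of the resolvent, and the spectral formula for $A^+$ to identify the value at $s=0$. Only the notation differs (the paper calls the kernel block $U_1$ and the invertible block $U_2$).
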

\begin{proof}
  If $A$ is nonsingular, then the conclusion follows immediately. Otherwise, let
  $A = U \Lambda U^T$ be an eigenvalue decomposition with orthogonal $U$ and
  $\Lambda = \DIAG(0, \Lambda_2)$, where $\Lambda_2 \in \Rrr$ and $r$ is the
  rank of $A$. We denote $U = \begin{pmatrix} U_1 & U_2 \end{pmatrix}$, with
  $U_2 \in \Rnr$. Then
  \begin{align*}
    A^+
    = U \Lambda^+ U^T
    =
    \begin{pmatrix}
      U_1 & U_2
    \end{pmatrix}
    \begin{pmatrix}
      0 & 0 \\
      0 & \Lambda_2^{-1}
    \end{pmatrix}
    \begin{pmatrix}
      U_1^T \\
      U_2^T
    \end{pmatrix}
    =
    U_2 \Lambda_2^{-1} U_2^T.
  \end{align*}
  Note that $CU_1 = 0$. We have
  \begin{align*}
    S(s)
    & =
    C U {(s I - \Lambda)}^{-1} U^T B \\
    & =
    C
    \begin{pmatrix}
      U_1 & U_2
    \end{pmatrix}
    \begin{pmatrix}
      s^{-1} I & 0 \\
      0 & {(s I- \Lambda_2)}^{-1}
    \end{pmatrix}
    \begin{pmatrix}
      U_1^T \\
      U_2^T
    \end{pmatrix}
    B \\
    & =
    C U_2 {(s I - \Lambda_2)}^{-1} U_2^T B.
  \end{align*}
  Hence, $S(s)$ is defined at $s = 0$ and $S(0) = -C U_2 \Lambda_2^{-1} U_2^T B
  = -C A^+ B$.
\end{proof}

Finally we discuss the model reduction technique known as Petrov-Galerkin
projection.
\begin{DF}
  Consider the system~\eqref{eq:simple-system}. Let $W, V \in \Rnr$, with $r <
  n$, such that $W^T V = I$. The matrix $V W^T$ is then a projector, called a
  \emph{Petrov-Galerkin projector}. The reduced order system
  \begin{align*}
    \dot{\hat x} &= W^T A V \hat x + W^T B u, \\
    \hat y &= C V \hat x,
  \end{align*}
  with $\hat x \in \R^r$ is called \emph{the Petrov-Galerkin projection} of the
  original system~\eqref{eq:simple-system}.
\end{DF}

\section{Problem formulation}\label{sec:problem}

In this paper, we consider networks of diffusively coupled linear subsystems.
These subsystems, called \emph{agents}, have identical dynamics, however a
selected subset of the agents, called the \emph{leaders}, also receives an input
from outside the network. The remaining agents are called \emph{followers}. The
network consists of $N$ agents, indexed by $i$, so $i \in \vlf := \{1, 2,
\ldots, N\}$. The subset $\vl \subset \vlf$ is the index set of the leaders,
more explicitly $\vl = \{v_1, v_2, \ldots, v_m\}$. The followers are indexed by
$\vf : = \vlf \setminus \vl$. More specifically, the leaders are represented by
the finite dimensional linear system
\begin{equation*}
  \dot x_i = A x_i + B \sum_{j = 1}^N a_{ij} (x_j - x_i) + E u_j, ~~i \in \vl,\
  i = v_j,
\end{equation*}
whereas the followers have dynamics
\begin{equation*}
  \dot x_i = A x_i + B \sum_{j = 1}^N a_{ij} (x_j - x_i), ~~i \in \vf.
\end{equation*}
The weights $a_{ij} \geq 0$ represent the coupling strengths of the diffusive
coupling between the agents. In this paper, we assume that $a_{ij} = a_{ji}$ for
all $i, j \in \vlf$. Also, $a_{ii} = 0$ for all $i \in \vlf$. Furthermore, $x_i
\in \R^n$ is the state of agent $i$, and $u_j \in \R^r$ is the external input to
the leader $v_j$. Finally, $A \in \R^{n \times n}$, $B\in \R^{n \times n}$ and
$E \in \R^{n \times r}$ are real matrices. It is customary to represent the
interaction between the agents by the graph $\graph$ with node set $\vlf = \{1,
2, \ldots, N\}$ and adjacency matrix $\adjmat = (a_{ij})$. In the set up of this
paper, this graph is undirected, reflecting the assumption that $\adjmat$ is
symmetric. The \emph{Laplacian matrix} of the graph $\graph$ is denoted by $L$
and defined as
\begin{equation*}
  L_{ij}
  =
  \begin{cases}
    d_i & \text{if } i = j, \\
    -a_{ij} & \text{if } i \neq j.
  \end{cases}
\end{equation*}
with $d_i = \sum_{j=1}^N a_{ij}$.

Recall that the set of leader nodes is $\vl = \{v_1, v_2, \ldots, v_m\}$, and
define the matrix $M \in \R^{N \times m}$ as
\begin{equation*}
  M_{ij}
  =
  \begin{cases}
    1 & \text{if } i = v_j, \\
    0 & \text{otherwise}.
  \end{cases}
\end{equation*}
Denote $x = \COL(x_1, x_2, \ldots, x_N)$ and $u = \COL(u_1, u_2, \ldots, u_m)$.
The total network is then represented by
\begin{equation}\label{eq:original-system}
  \dot x = (I \otimes A - L \otimes B) x + (M \otimes E) u.
\end{equation}
The goal of this paper is to find a reduced order networked system, whose
dynamics is a good approximation of the networked
system~\eqref{eq:original-system}. In this paper, the idea to obtain such
approximation is to {\em cluster\/} groups of agents in the network, and to
treat each of the resulting clusters as a node in a new, reduced order, network.
The reduced order network will again be a leader-follower network, and by the
clustering procedure essential interconnection features of the network will be
preserved. We will require that the {\em synchronization\/} properties of the
network are preserved after reduction. We will assume that the original network
is synchronized, meaning that if the external inputs $u_j = 0$ for $j=1, 2,
\ldots, m$, the network reaches synchronization, that is, for all $i,j \in
\vlf$, we have
\begin{equation*}
  x_i(t) - x_j(t) \rightarrow 0
\end{equation*}
as $t \rightarrow \infty$. We will impose that the reduction procedure preserves
this property. In this paper, a standing assumption will be that the graph
$\graph$ of the original network is {\em connected}. This is equivalent to the
condition that $0$ is a simple eigenvalue of the Laplacian $L$,
see~\cite[Theorem~2.8]{MesE10}. In this case, the network reaches
synchronization if and only if $(L \otimes I) x(t) \rightarrow 0$ as $t
\rightarrow \infty$.

In order to be able to compare the original network~\eqref{eq:original-system}
with its reduced order approximation and to make statements about the
approximation error, we need a notion of \emph{distance} between the networks.
One way to obtain such notion is to introduce an \emph{output} associated with
the network~\eqref{eq:original-system}. By doing this, both the original network
and its approximation become input-output systems, and we can compare them by
looking at the difference of their transfer functions. Being a measure for the
disagreement between the states of the agents in~\eqref{eq:original-system}, we
choose $y = (L \otimes I) x$ as the output of the original network. Indeed, this
output $y$ can be considered a measure of the disagreement in the network, in
the sense that $y(t)$ is small if and only if the network is close to being
synchronized. Thus, with the original system~\eqref{eq:original-system} we now
identify the input-state-output system:
\begin{equation}\label{eq:extended-system}
  \begin{aligned}
    \dot x &= (I \otimes A - L \otimes B) x + (M \otimes E) u, \\
    y &= (L \otimes I) x.
  \end{aligned}
\end{equation}
The state space dimension of~\eqref{eq:extended-system} is equal to $n N$, its
number of inputs equals to $m r$, and the number of outputs is $n N$.

In this paper, we will use clustering to obtain a reduced order network, i.e.\ a
network with a reduced number of agents, as an approximation of the original
network~\eqref{eq:extended-system}. We also aim at deriving upper bounds for the
approximation error. We will obtain upper bounds both for the ${\cH}_2$-norm as
well as the $\cH_{\infty}$-norm of the difference of the transfer functions of
the original network and its approximation.

\section{Graph partitions and reduction by clustering}\label{sec:partitions}

We consider networks whose interaction topologies are represented by weighted
graphs $\graph$ with node set $\vlf$. The graph of the original
network~\eqref{eq:original-system} is undirected, however, our reduction
procedure will lead to networks on directed graphs. As before, the adjacency
matrix of the graph $\graph$ is the matrix $\adjmat = (a_{ij})$, where $a_{ij}
\geq 0$ is the weight of the arc from node $j$ to node $i$. As noted before, the
graph is undirected if and only if $\adjmat$ is symmetric.

A nonempty subset $C \subseteq \vlf$ is called a \emph{cell} or \emph{cluster}
of $\vlf$. A \emph{partition} of a graph is defined as follows.
\begin{DF}
  Let $\graph$ be an undirected graph. A partition $\pi = \{C_1, C_2, \ldots,
  C_k\}$ of $\vlf$ is a collection of cells such that $\vlf = \bigcup_{i = 1}^k
  C_i$ and $C_i \cap C_j = \emptyset$ whenever $i \neq j$. When we say that
  $\pi$ is a partition of $\graph$, we mean that $\pi$ is a partition of the
  vertex set $\vlf$ of $\graph$. Nodes $i$ and $j$ are called \emph{cellmates}
  in $\pi$ if they belong to the same cell of $\pi$. The \emph{characteristic
  vector} of a cell $C \subseteq \vlf$ is the $N$-dimensional column vector
  $p(C)$ defined as
  \begin{equation*}
    p_i(C)
    =
    \begin{cases}
      1 & \text{if } i \in C, \\
      0 & \text{otherwise.}
    \end{cases}
  \end{equation*}
  The \emph{characteristic matrix} of the partition $\pi = \{C_1, C_2, \ldots,
  C_k\}$ is defined as the $N \times k$ matrix
  \begin{equation*}
    P(\pi)
    =
    \begin{pmatrix}
      p(C_1) & p(C_2) & \cdots & p(C_k)
    \end{pmatrix}.
  \end{equation*}
\end{DF}
For a given partition $\pi = \{C_1, C_2, \ldots, C_k\}$, consider the cells
$C_p$ and $C_q$ with $p \neq q$. For any given node $j \in C_q$, we define its
\emph{degree with respect to $C_p$} as the sum the weights of all arcs from $j$
to $i \in C_p$, i.e.\ the number
\[
  d_{pq}(j) := \sum_{i \in C_p} a_{ij}.
\]
Next, we will construct a reduced order approximation
of~\eqref{eq:extended-system} by clustering the agents in the network according
to a partition of $\graph$. Let $\pi$ be a partition of $\graph$, and let $P :=
P(\pi)$ be its characteristic matrix. Extending the main idea
in~\cite{morMonTC14}, we take as reduced order system the Petrov-Galerkin
projection of the original system~\eqref{eq:extended-system}, with the following
choice for the matrices $V$ and $W$:
\begin{equation*}
  W = P \PTP^{-1} \otimes I, \quad
  V = P \otimes I.
\end{equation*}
The dynamics of the resulting reduced order model is then given by
\begin{equation}\label{eq:reduced-system}
  \begin{aligned}
    \dot{\widehat{x}} &= (I \otimes A - \widehat{L} \otimes B) \widehat{x} +
    (\widehat{M} \otimes E) u \\
    \widehat{y} &= (L P \otimes I) \widehat{x}.
  \end{aligned}
\end{equation}
where
\begin{equation*}
  \begin{aligned}
    \widehat{L} &= \PTP^{-1} P^T L P \\
    \widehat{M} &= \PTP^{-1} P^T M,
  \end{aligned}
\end{equation*}
We claim that the matrix $\hat L$ is the Laplacian of a weighted \emph{directed}
graph with node set $\{1, 2, \ldots, k\}$, with $k$ equal to the number of
clusters in the partition $\pi$. Indeed, by inspection it can be seen that the
adjacency matrix of this reduced graph is $\hat{\adjmat} = (\hat{a}_{pq})$, with
\[
  \hat{a}_{pq} = \frac{1}{\card{C_p}} \sum_{j \in C_q} d_{pq}(j),
\]
where $d_{pq}(j)$ is the degree of $j \in C_q$ with respect to $C_p$, and
$\card{C_p}$ the cardinality of $C_p$. Note also that the row sums of $\hat L$
are equal to zero since $\hat L \bbone_k = 0$. The matrix $\hat M \in \R^{k
\times m}$ satisfies
\begin{equation*}
  \hat{M}_{pj}
  =
  \begin{cases}
    \frac{1}{\card{C_p}} & \text{if } v_j \in C_p, \\
    0 & \text{otherwise},
  \end{cases}
\end{equation*}
where $v_1, v_2, \ldots, v_m$ are the leader nodes, $p = 1, 2, \ldots, k$,
and $j = 1, 2, \ldots, m$.

Clearly, the state space dimension of the reduced order
network~\eqref{eq:reduced-system} is equal to $n k$, whereas the dimensions $m
r$ and $n N$ of the input and output have remained unchanged. Thus we can
investigate the error between the original and reduced order network by looking
at the difference of their transfer functions. In the sequel we will both
investigate the ${\cH}_2$-norm as well as the ${\cH}_{\infty}$-norm of this
difference.

Before doing this however, we will now first study the question whether our
reduction procedure preserves synchronization. It is important to note that
since, by assumption, the original undirected graph is connected, it has a
directed spanning tree. It is easily verified that this property is preserved by
our clustering procedure. Then, since the property of having a directed spanning
tree is equivalent with 0 being a simple eigenvalue of the Laplacian
(see~\cite[Proposition 3.8]{MesE10}), the reduced order Laplacian $\hat{L}$
has again 0 as a simple eigenvalue.

Now assume that the original network~\eqref{eq:extended-system} is synchronized.
It is well known, see e.g.~\cite{TreTM13}, that this is equivalent with the
condition that for each nonzero eigenvalue $\lambda$ of the Laplacian $L$ the
matrix $A - \lambda B$ is Hurwitz. Thus, synchronization is preserved if and
only if for each nonzero eigenvalue $\hat{\lambda}$ of the reduced order
Laplacian $\hat{L}$ the matrix $A - \hat{\lambda}B$ is Hurwitz.

Unfortunately, in general $A - \lambda B$ Hurwitz for all nonzero $\lambda \in
\sigma(L)$ does \emph{not} imply that $A - \hat{\lambda} B$ Hurwitz for all
nonzero $\lambda \in \sigma(\hat{L})$. An exception is the `single integrator'
case $A = 0$ and $B = 1$, where this condition is trivially satisfied, so in
this special case synchronization is preserved. Also if we restrict ourselves to
a special type graph partitions, namely \emph{almost equitable partitions}, then synchronization
turns out to be preserved. We will review this type of partition now.

Again let $\graph$ be a weighted, undirected graph, and let $\pi = \{C_1, C_2,
\ldots, C_k\}$ be a partition of $\graph$. Given two clusters $C_p$ and $C_q$
with $p \neq q$, and a given node $j \in C_q$, recall that $d_{pq}(j)$ denotes
its degree with respect to $C_p$. We call the partition $\pi$ an \emph{almost equitable partition (AEP)}
if for each $p, q$ with $p \neq q$, the degree $d_{pq}(j)$ is independent of $j
\in C_q$, i.e.\ $d_{pq}(j_1) = d_{pq}(j_2)$ for all $j_1, j_2 \in C_q$.

It is a well known fact (see~\cite{CarDR07}) that $\pi$ is an AEP if and
only if the image of its characteristic matrix in invariant under the Laplacian.
\begin{LM}\label{lem:aep}
  Consider the weighed undirected graph $\graph$ with Laplacian matrix $L$. Let
  $\pi$ be a partition of $\graph$ with characteristic matrix $P := P(\pi)$.
  Then $\pi$ is an almost equitable partition if and only if $L \IM P \subset \IM P$.
\end{LM}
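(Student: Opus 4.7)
The plan is a direct entrywise computation, exploiting that $\IM P$ is precisely the set of vectors that are constant on each cell of $\pi$, so $L\IM P \subset \IM P$ is equivalent to $Lp(C_q)$ being constant on each cell, for every $q$.

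First I would record the characterization of $\IM P$: since the columns $p(C_1),\ldots,p(C_k)$ of $P$ are linearly independent indicator vectors partitioning $\bbone_N$, a vector $v\in\R^N$ lies in $\IM P$ if and only if $v_i=v_{i'}$ whenever $i,i'$ are cellmates. Hence the inclusion $L\IM P\subset \IM P$ reduces to checking that, for each $q$, the vector $Lp(C_q)$ is cell-constant. Next I would compute, for $i\in C_p$ with $p\neq q$,
\begin{equation*}
  \bigl(Lp(C_q)\bigr)_i = \sum_{j\in C_q} L_{ij} = -\sum_{j\in C_q} a_{ij},
\end{equation*}
and for $i\in C_q$,
\begin{equation*}
  \bigl(Lp(C_q)\bigr)_i = L_{ii} - \sum_{\substack{j\in C_q\\ j\neq i}} a_{ij} = \sum_{j\notin C_q} a_{ij}.
\end{equation*}

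Now I would exploit the undirected assumption $a_{ij}=a_{ji}$: for $i\in C_p$ with $p\neq q$, the quantity $\sum_{j\in C_q}a_{ij}$ is exactly the degree $d_{qp}(i)$ of the node $i\in C_p$ with respect to the cell $C_q$. Thus constancy of $(Lp(C_q))_i$ on $C_p$ is the AEP condition that $d_{qp}(\cdot)$ be constant on $C_p$. Ranging $p$ and $q$ over all unordered pairs with $p\neq q$ then yields the equivalence on the off-diagonal blocks.

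For the remaining case $i\in C_q$, I would avoid a separate computation by observing that $L\bbone_N=0$ and $\bbone_N=\sum_{q=1}^{k} p(C_q)$, so
\begin{equation*}
  Lp(C_q) = -\sum_{p\neq q} Lp(C_p).
\end{equation*}
If the AEP condition holds, each $Lp(C_p)$ with $p\neq q$ is cell-constant by the previous paragraph, hence so is $Lp(C_q)$; conversely, if all $Lp(C_q)$ lie in $\IM P$, restricting to indices in $C_p$ with $p\neq q$ gives the AEP condition back. Assembling both directions proves the lemma. The only subtle point — and the only place I expect to need any care — is keeping straight the role of the two indices of $d_{pq}$ under the symmetry $a_{ij}=a_{ji}$; everything else is mechanical.
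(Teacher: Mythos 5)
Your argument is correct and complete; note that the paper itself gives no proof of this lemma at all (it simply cites~\cite{CarDR07} and calls it a ``well known fact''), so there is nothing in the text to compare against --- you are supplying the omitted elementary proof. The structure is sound: $\IM P$ is exactly the set of cell-constant vectors, so invariance reduces to cell-constancy of each $Lp(C_q)$; your entrywise formulas are right ($a_{ii}=0$ makes the $i\in C_q$ case come out to $\sum_{j\notin C_q}a_{ij}$ as you claim); and the trick of handling the diagonal block via $Lp(C_q)=-\sum_{p\neq q}Lp(C_p)$ cleanly avoids a separate verification that $\sum_{j\notin C_q}a_{ij}$ is constant on $C_q$. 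The one piece of bookkeeping you flagged is indeed the only delicate spot: with the paper's convention $d_{pq}(j)=\sum_{i\in C_p}a_{ij}$ for $j\in C_q$, the quantity $\sum_{j\in C_q}a_{ij}$ for $i\in C_p$ equals $d_{qp}(i)$ only after invoking $a_{ij}=a_{ji}$, and you do invoke it; since the lemma is stated for undirected graphs this is legitimate. Ranging over all ordered pairs $(q,p)$ with $p\neq q$ (each arising from restricting $Lp(C_q)$ to $C_p$) recovers every AEP condition, so both directions close.
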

As an immediate consequence, the reduced Laplacian $\hat{L}$ obtained using
an AEP satisfies $L P = P \hat{L}$. Indeed, since $\IM P$ is $L$-invariant we
have $L P = P X$ for some matrix $X$. Obviously we must then have $X = \PTP^{-1}
P^T L P = \hat L$. From this, it follows that $\sigma(\hat L) \subset
\sigma(L)$. It then readily follows that synchronization is preserved if we
cluster according to an AEP:
\begin{TM}
  Assume that the network~\eqref{eq:extended-system} is synchronized. Let $\pi$
  be an almost equitable partition. Then the reduced order network~\eqref{eq:reduced-system}
  obtained by clustering according to $\pi$ is synchronized.
\end{TM}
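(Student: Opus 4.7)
The plan is to reduce synchronization of the reduced network to the already-known synchronization criterion for the original, using the spectral inclusion $\sigma(\hat L)\subset\sigma(L)$ noted right before the statement. Concretely, recall from the text that synchronization of \eqref{eq:extended-system} is equivalent to $A-\lambda B$ being Hurwitz for every nonzero $\lambda\in\sigma(L)$, and that \eqref{eq:reduced-system} is synchronized if and only if $A-\hat\lambda B$ is Hurwitz for every nonzero $\hat\lambda\in\sigma(\hat L)$. So it suffices to show every nonzero eigenvalue of $\hat L$ is a nonzero eigenvalue of $L$.

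First I would establish $\sigma(\hat L)\subset\sigma(L)$. The key ingredient is the identity $L P = P\hat L$, which is derived from Lemma \ref{lem:aep} (since $\pi$ is an AEP, $\IM P$ is $L$-invariant, so $LP=PX$ for some $X$, and left-multiplying by $\PTP^{-1}P^T$ forces $X=\hat L$). Then, for any eigenpair $(\hat\lambda,v)$ of $\hat L$ with $v\neq 0$, we have
\begin{equation*}
  L(Pv) = P\hat L v = \hat\lambda (Pv).
\end{equation*}
Because $P$ is the characteristic matrix of a partition into nonempty disjoint cells, its columns are linearly independent, so $P$ has full column rank and $Pv\neq 0$. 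Hence $\hat\lambda\in\sigma(L)$.

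Second, I would invoke the discussion preceding the theorem: since $\graph$ is connected it has a directed spanning tree, and this property is preserved by the clustering procedure, so $0$ is a simple eigenvalue of $\hat L$. Combined with $\sigma(\hat L)\subset\sigma(L)$, every \emph{nonzero} $\hat\lambda\in\sigma(\hat L)$ is a \emph{nonzero} element of $\sigma(L)$. By the synchronization hypothesis applied to $L$, $A-\hat\lambda B$ is Hurwitz, which by the quoted criterion yields synchronization of the reduced system.

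I do not anticipate any real obstacle; the entire argument is essentially a bookkeeping exercise once Lemma \ref{lem:aep} and the standard criterion for synchronization of diffusively coupled networks are in hand. The only subtlety worth stating carefully is the full column rank of $P$, which is what lets us conclude $Pv\neq 0$ and therefore transfer eigenvalues from $\hat L$ back to $L$.
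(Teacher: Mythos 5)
Your argument is exactly the one the paper intends: it leaves the proof implicit in the preceding discussion, which establishes $LP = P\hat L$ from Lemma~\ref{lem:aep}, deduces $\sigma(\hat L)\subset\sigma(L)$, notes that $0$ remains a simple eigenvalue of $\hat L$, and invokes the Hurwitz criterion for $A-\lambda B$. Your write-up is correct and in fact slightly more careful than the paper, since you make explicit the full-column-rank step that justifies transferring eigenvalues from $\hat L$ to $L$.
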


\section{\texorpdfstring{$\boldsymbol{\cH_2}$}{H2}-error bounds}\label{sec:H_2}

In this section, we investigate the $\cH_2$-norm of the error system mapping the
input $u$ to the difference $y - \hat y$ in the case that the original network
is clustered according to an AEP. Let $S$ and $\hat{S}$ denote the transfer
functions of the original~\eqref{eq:extended-system} and reduced order
network~\eqref{eq:reduced-system}, respectively. We have the following lemma:
\begin{LM}\label{lem:error-orthogonal}
  Let $\pi$ be an almost equitable partition of the graph $\graph$. The approximation error when
  clustering $\graph$ according to $\pi$ then satisfies
  \begin{equation*}
    \Htwonorm[\big]{S - \widehat{S}}^2 =
    \Htwonorm{S}^2 - \Htwonorm[\big]{\widehat{S}}^2.
  \end{equation*}
\end{LM}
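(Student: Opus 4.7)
My plan is to exhibit a common state-space realization for $S$, $\widehat{S}$, and their difference, and then invoke a Pythagorean identity in $\cH_2$. Because $\pi$ is an AEP, Lemma~\ref{lem:aep} gives $L P = P \widehat{L}$, from which
\begin{equation*}
    (I \otimes A - L \otimes B)(P \otimes I) = (P \otimes I)(I \otimes A - \widehat{L} \otimes B).
\end{equation*}
Setting $A_c := I \otimes A - L \otimes B$ and $\widehat{A}_c := I \otimes A - \widehat{L} \otimes B$, this yields the resolvent identity $(sI - A_c)^{-1}(P \otimes I) = (P \otimes I)(sI - \widehat{A}_c)^{-1}$. Substituting into $\widehat{S}(s) = (LP \otimes I)(sI - \widehat{A}_c)^{-1}(\widehat{M} \otimes E)$ and using $V \widehat{M} = \Pi M$, where $\Pi := P\PTP^{-1}P^T$ is the orthogonal projector onto $\IM P$, I obtain the equivalent (non-minimal) realization $(A_c,\, \Pi M \otimes E,\, L \otimes I)$ for $\widehat{S}$ on $\R^{nN}$. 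Thus $S$ and $\widehat{S}$ share the same dynamics matrix $A_c$ and output matrix $C := L \otimes I$, and $S - \widehat{S}$ inherits the realization $(A_c,\, (I - \Pi) M \otimes E,\, C)$.

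Synchronization together with connectedness of $\graph$ ensures $\cX_+(A_c) \subseteq \ker C$, so Proposition~\ref{prop:Lyapunov} applies. Let $X$ be the distinguished positive semi-definite solution of $A_c^T X + X A_c + C^T C = 0$; crucially, $X$ depends only on $A_c$ and $C$, not on the input matrix. Applying the trace formula $\TR\PAREN*{B^T X B}$ to each of the three realizations above and expanding $M \otimes E = (\Pi M \otimes E) + ((I - \Pi) M \otimes E)$, the claim of the lemma reduces to showing that the cross-term $\TR\PAREN*{(\Pi M \otimes E)^T X ((I - \Pi) M \otimes E)}$ vanishes.

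For this last step I would exploit the symmetry of $L$: since $\IM P$ is $L$-invariant by the AEP and $L = L^T$, the orthogonal complement $(\IM P)^\perp$ is also $L$-invariant. Consequently $L$, and hence both $A_c$ and $C^T C = L^2 \otimes I$, decompose block-diagonally with respect to the orthogonal splitting $\R^{nN} = \IM(P \otimes I) \oplus \PAREN{\IM(P \otimes I)}^\perp$. The Lyapunov equation then decouples, so $X$ inherits the same block-diagonal structure; since $\Pi M \otimes E$ lies in $\IM(P \otimes I)$ while $(I - \Pi) M \otimes E$ lies in its orthogonal complement, the cross-term vanishes. The main obstacle is the realization step: recognizing that the AEP is precisely the structural property that allows $\widehat{S}$ to be lifted to a realization sharing $(A_c, C)$ with $S$. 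Once this is in hand, the rest is standard Lyapunov bookkeeping combined with the orthogonal-decomposition argument afforded by the symmetry of $L$.
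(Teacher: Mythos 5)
Your argument is correct, but it follows a genuinely different route from the paper's. The paper changes coordinates with $T = \begin{pmatrix} P & Q \end{pmatrix}$, where $\IM Q = (\IM P)^\perp$, notes that the AEP forces $Q^T L P = 0$ and $Q^T L^2 P = 0$, writes $S = \widehat S + \Delta$ with $\Delta$ the transfer function of the truncated block, and kills the cross term in the frequency domain via $\widehat S{(-s)}^T \Delta(s) = 0$, after which the Pythagorean identity follows from the integral definition of the $\cH_2$-norm. You instead use the intertwining $L P = P \widehat L$ to lift $\widehat S$ to a realization on the full state space sharing $(A_c, C)$ with $S$, so that all three norms are evaluated with one and the same distinguished Gramian $X$, and you kill the cross term by showing $X$ is block diagonal with respect to the orthogonal $L$-invariant splitting $\IM(P \otimes I) \oplus {\IM(P \otimes I)}^\perp$. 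Both proofs rest on the same structural fact (symmetry of $L$ plus $L$-invariance of $\IM P$ makes the splitting reducing), but yours is a time-domain/Gramian argument where the paper's is frequency-domain, and your lifting makes transparent why the error system has input matrix $(I - \Pi) M \otimes E$. One step needs a sentence more care: since $A_c$ is not Hurwitz, the Lyapunov equation does not have a unique solution, and ``decoupling'' alone does not force an arbitrary solution to be block diagonal --- the off-diagonal block satisfies the homogeneous Sylvester equation $A_{11}^T X_{12} + X_{12} A_{22} = 0$, which can admit nonzero solutions when $A$ has closed-right-half-plane eigenvalues. The fix is to build the block-diagonal candidate explicitly (the distinguished solution on the $\IM(P \otimes I)$ block, where all of $\cX_+(A_c) = \bbone_N \otimes \cX_+(A)$ lives, and the unique solution on the Hurwitz complementary block), verify $\cX_+(A_c) \subseteq \KER X$, and invoke the uniqueness clause of Proposition~\ref{prop:Lyapunov} to identify it with the $X$ you are using; with that one-line patch the proof is complete.
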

\begin{proof}
  First, note that the columns of $P(\pi)$ are orthogonal. We construct a matrix
  $T = \begin{pmatrix} P & Q \end{pmatrix}$, where $P := P(\pi)$, and where the
  $N \times (N - k)$ matrix $Q$ is chosen such that the columns of $T$ form an
  orthogonal basis for $\R^N$. In this case, we have $P^T Q = 0$. Next, we
  apply the state space transformation $x = T \tilde x$ to
  system~\eqref{eq:extended-system}. We obtain
  \begin{equation}\label{eq:transformed-system}
    \begin{aligned}
      \begin{pmatrix}
        \dot {\tilde x}_1 \\
        \dot {\tilde x}_2
      \end{pmatrix}
      &=
      A_e
      \begin{pmatrix}
        {\tilde x}_1 \\
        {\tilde x}_2
      \end{pmatrix}
      +
      B_e
      u \\
      y &=
      C_e
      \begin{pmatrix}
        {\tilde x}_1 \\
        {\tilde x}_2
      \end{pmatrix},
    \end{aligned}
  \end{equation}
  where the matrices $A_e$, $B_e$, and $C_e$ are given by
  \begin{gather*}
    A_e =
    \begin{pmatrix}
      I \otimes A - {\PTP}^{-1} P^T L P \otimes B &
      - {\PTP}^{-1} P^T L Q \otimes B \\
      - \PAREN[\big]{Q^T Q}^{-1} Q^T L P \otimes B &
      I \otimes A - \PAREN[\big]{Q^T Q}^{-1} Q^T L Q \otimes B \\
    \end{pmatrix}, \\
    B_e =
    \begin{pmatrix}
      {\PTP}^{-1} P^T M \otimes E \\
      \PAREN[\big]{Q^T Q}^{-1} Q^T M \otimes E \\
    \end{pmatrix}, \quad
    C_e =
    \begin{pmatrix}
      L P \otimes I &
      L Q \otimes I
    \end{pmatrix}.
  \end{gather*}
  Obviously, in~\eqref{eq:transformed-system} the transfer function from $u$ to
  $y$ is equal to $S$. Furthermore, if the state component ${\tilde x}_2$ is
  truncated from~\eqref{eq:transformed-system}, what we are left with is the
  reduced order model~\eqref{eq:reduced-system}. Since $\pi$ is an AEP of
  $\graph$, by Lemma~\ref{lem:aep}, $\IM P$ is invariant under $L$. From this,
  it follows that not only $Q^T P = 0$, but also
  \begin{equation}\label{eq:orthogonal}
    Q^T L P = 0 \text{ and } Q^T L^2 P = 0.
  \end{equation}
  It is easily checked that
  \begin{equation*}
    S(s) = \hat S(s) + \Delta(s),
  \end{equation*}
  where $\Delta(s)$ is given by
  \begin{equation}\label{eq:delta}
    \begin{split}
      \Delta(s) =
      (L Q \otimes I)
      \PAREN*{s I - \PAREN*{I \otimes A - \PAREN[\big]{Q^T Q}^{-1} Q^T L Q
      \otimes B}}^{-1} \\
      \times \PAREN*{\PAREN[\big]{Q^T Q}^{-1} Q^T M \otimes E}.
    \end{split}
  \end{equation}
  From~\eqref{eq:orthogonal} and~\eqref{eq:delta}, we have ${\hat S}{(-s)}^T
  \Delta(s) = 0$. Thus we find that
  \begin{equation*}
    \Htwonorm{S}^2 = \Htwonorm[\big]{\widehat{S}}^2 + \Htwonorm{\Delta}^2,
  \end{equation*}
  which concludes the proof.
\end{proof}

We will now formulate the main theorem of this section, which establishes an a
priori upper bound for the ${\cH}_2$-norm of the approximation error in the case
that we cluster according to an AEP. Before formulating the theorem, we
discuss some important ingredients. An important role is played by the $N - 1$
auxiliary input-state-output systems
\begin{equation}\label{eq:aux-system}
  \begin{aligned}
    \dot x &= (A - \lambda B) x + E d, \\
    z &= \lambda x,
  \end{aligned}
\end{equation}
where $\lambda$ ranges over the nonzero eigenvalues of the Laplacian $L$. Let
$S_{\lambda}(s) = \lambda {(sI - A + \lambda B)}^{-1} E$ be the transfer
functions of these systems. We assume that the original
network~\eqref{eq:extended-system} is synchronized, so that all of the $A -
\lambda B$ are Hurwitz. Let $\Htwonorm{S_{\lambda}}$ denote the $\cH_2$-norm of
$S_{\lambda}$. Recall that the set of leader nodes is $\vl = \{v_1, v_2,
\ldots, v_m\}$. Node $v_i$ will be called leader $i$. This leader is an element
of cluster $C_{k_i}$ for some $k_i \in \{1, 2, \ldots, k\}$. We now have the
following theorem:
\begin{TM}\label{thm:main}
  Assume that the network~\eqref{eq:extended-system} is synchronized. Let $\pi$
  be an almost equitable partition of the graph $\graph$. The absolute approximation error when
  clustering $\graph$ according to $\pi$ then satisfies
  \begin{equation*}
    \Htwonorm[\big]{S - \widehat{S}}^2 \leq
    S_{\max, \cH_2}^2
    \sum^m_{i = 1} \PAREN*{1 - \frac{1}{\card{C_{k_i}}}},
  \end{equation*}
  where $C_{k_i}$ is the set of cellmates of leader $i$, and
  \[
    S_{\max, \cH_2} :=
    \max_{\lambda \in \sigma(L) \setminus \sigma(\hat L)} \Htwonorm{S_\lambda}.
  \]
  Furthermore, the relative approximation error satisfies
  \begin{equation*}
    \frac{\Htwonorm[\big]{S - \hat S}^2}{\Htwonorm{S}^2} \leq
    \frac{S_{\max, \cH_2}^2}{S_{\min, \cH_2}^2}
    \frac{\sum^m_{i = 1} \PAREN*{1 - \frac{1}{\card{C_{k_i}}}}}{m \PAREN*{1 -
    \frac{1}{N}}},
  \end{equation*}
  where
  \[
    S_{\min, \cH_2} :=
    \min_{\lambda \in \sigma(L) \setminus \{0\}} \Htwonorm{S_\lambda}.
  \]
\end{TM}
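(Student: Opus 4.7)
The plan is to exploit Lemma~\ref{lem:error-orthogonal}, which reduces the task to computing $\Htwonorm{S}^2 - \Htwonorm{\widehat{S}}^2$ directly. Because $\graph$ is undirected, $L$ is symmetric, and by Lemma~\ref{lem:aep} the subspace $\IM P$ is $L$-invariant; by symmetry of $L$ its orthogonal complement is $L$-invariant as well. I would therefore choose an orthonormal eigenbasis $w_1, \dots, w_N$ of $L$ such that $w_1, \dots, w_k$ span $\IM P$, with $w_1 = \bbone_N/\sqrt{N}$ corresponding to the simple zero eigenvalue (which lies in $\IM P$ since $P \bbone_k = \bbone_N$), and $w_{k+1}, \dots, w_N$ span $(\IM P)^\perp$. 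Denote the associated eigenvalues by $\mu_1 = 0, \mu_2, \dots, \mu_N$.

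Applying the orthogonal state transformation whose matrix is $[w_1, \dots, w_N] \otimes I$ decouples~\eqref{eq:extended-system} into $N$ independent $n$-dimensional blocks with system matrices $A - \mu_j B$ and with output contributions of the form $\mu_j w_j \otimes (\cdot)$. Orthonormality of the $w_j$ annihilates all cross-terms in $\TR\PAREN{S(-i\omega)^T S(i\omega)}$, and a short Kronecker-product computation identifies the $j$-th block transfer function with $S_{\mu_j}$ post-multiplied by $(M^T w_j)^T \otimes I_r$; since $\mu_1 = 0$ kills the first output, one obtains
\begin{equation*}
  \Htwonorm{S}^2 = \sum_{j=2}^{N} \norm{M^T w_j}^2 \Htwonorm{S_{\mu_j}}^2.
\end{equation*}
For $\widehat S$, I would symmetrize via the state transformation $\widehat x = \PAREN[\big]{\PTP^{-1/2} \otimes I}\bar x$, which replaces $\widehat L$ by the symmetric matrix $\widetilde L := \PTP^{-1/2} P^T L P \PTP^{-1/2}$. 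Since the columns of $P \PTP^{-1/2}$ form an orthonormal basis of $\IM P$ and $L \IM P \subset \IM P$, the spectrum of $\widetilde L$ coincides (with multiplicity) with $\{\mu_1, \dots, \mu_k\}$, and its orthonormal eigenvectors map under $P \PTP^{-1/2}$ to $w_1, \dots, w_k$. The identical block-diagonal calculation then produces
\begin{equation*}
  \Htwonorm{\widehat{S}}^2 = \sum_{j=2}^{k} \norm{M^T w_j}^2 \Htwonorm{S_{\mu_j}}^2.
\end{equation*}

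Subtracting the two identities and bounding each remaining $\Htwonorm{S_{\mu_j}}^2$ by $S_{\max, \cH_2}^2$ gives
\begin{equation*}
  \Htwonorm[\big]{S - \widehat{S}}^2 \leq S_{\max, \cH_2}^2 \sum_{j=k+1}^{N} \norm{M^T w_j}^2.
\end{equation*}
The remaining combinatorial step is to evaluate the trailing sum. Using that $\sum_{j=1}^{N} w_j w_j^T = I$ while $\sum_{j=1}^{k} w_j w_j^T = P \PTP^{-1} P^T$ is the orthogonal projector onto $\IM P$, one obtains
\begin{equation*}
  \sum_{j=k+1}^{N} \norm{M^T w_j}^2 = \TR\PAREN*{M^T M} - \TR\PAREN*{M^T P \PTP^{-1} P^T M}.
\end{equation*}
Since $M^T M = I_m$ and $M^T P$ is the $m \times k$ matrix whose $i$-th row has a single $1$ in column $k_i$, the diagonal structure of $\PTP$ reduces the second trace to $\sum_{i=1}^{m} 1/\card{C_{k_i}}$, which yields the claimed factor $\sum_{i=1}^{m}\PAREN[\big]{1 - 1/\card{C_{k_i}}}$. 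For the relative bound, applying the same trace identity with the trivial partition $P = \bbone_N$ gives $\sum_{j=2}^{N} \norm{M^T w_j}^2 = m\PAREN[\big]{1 - 1/N}$, hence the lower bound $\Htwonorm{S}^2 \geq S_{\min, \cH_2}^2 \cdot m\PAREN[\big]{1 - 1/N}$, from which the relative inequality is immediate.

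The main obstacle I anticipate is on the reduced-system side: producing a clean identification of the eigenstructure of the non-symmetric $\widehat L$ with the restriction of $L$ to $\IM P$, so that the summands for $\Htwonorm{\widehat S}^2$ match termwise with the first $k-1$ nonzero summands of the sum for $\Htwonorm{S}^2$. Once the symmetrization via $\PTP^{1/2}$ aligns the eigenvectors, the telescoping is automatic and the rest is bookkeeping.
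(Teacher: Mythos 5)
Your proposal is correct and follows essentially the same route as the paper: both reduce to $\Htwonorm{S}^2 - \Htwonorm{\widehat S}^2$ via Lemma~\ref{lem:error-orthogonal}, build the adapted orthogonal eigenbasis $U = \begin{pmatrix} U_1 & U_2 \end{pmatrix}$ with $U_1 = P\PTP^{-1/2}\widehat U$ from the symmetrized $\bar L$, obtain the modal identity $\Htwonorm{S}^2 = \sum_{j\ge 2}\norm{M^T w_j}^2\Htwonorm{S_{\mu_j}}^2$ (the paper phrases this via Proposition~\ref{prop:Lyapunov} and per-mode observability Gramians rather than directly in the frequency domain), telescope, and evaluate $\TR\PAREN{M^T P\PTP^{-1}P^T M}=\sum_i 1/\card{C_{k_i}}$ for the combinatorial factor. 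The only cosmetic difference is that the paper's Lyapunov-equation formulation makes the treatment of the non-Hurwitz zero mode explicit, which your argument handles implicitly by noting that $S_{\mu_1}=0$.
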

\begin{RM}
  We see that with a fixed number of agents and a fixed number of leaders, the
  approximation error is equal to 0 if in each cluster that contains a leader,
  the leader is the only node in that cluster. In general, the upper bound
  increases if the number of cellmates of the leaders increases.
\end{RM}
\begin{proof}
  Recall that $\sigma(\hat L) \subset \sigma(L)$. Label the eigenvalues of $L$
  as $0, \lambda_2, \lambda_3, \ldots, \lambda_N$ in such a way that $0,
  \lambda_2, \lambda_3, \ldots, \lambda_k$ are the eigenvalues of $\hat{L}$.
  Also, without loss of generality, we assume that $\pi$ is \emph{regularly
  formed}, i.e.\ all ones in each of the columns of $P(\pi)$ are consecutive.
  One can always relabel the agents in the graph in such a way that this is
  achieved. For simplicity, we again denote $P(\pi)$ by $P$. Recall that the
  reduced Laplacian matrix is given by $\hat L = {\PTP}^{-1} P^T L P$. From
  Lemma~\ref{lem:error-orthogonal} we have that the approximation error
  satisfies
  \begin{equation*}
    \Htwonorm{S - \hat S}^2 = \Htwonorm{S}^2 - \Htwonorm{\hat S}^2.
  \end{equation*}
  We will first compute the $\cH_2$-norms of $S$ and $\hat S$ separately and
  then give an upper bound for the difference.

  Consider the symmetric matrix
  \begin{equation}\label{eq:L-bar}
    \bar L
    :=
    {\PTP}^{\half} \hat L {\PTP}^{-\half}
    =
    {\PTP}^{-\half} P^T L P{\PTP}^{-\half}.
  \end{equation}
  Note that the eigenvalues of $\bar L$ and $\hat L$ coincide. Let $\hat U$ be
  an orthogonal matrix that diagonalizes $\bar L$. We then have
  \begin{equation}\label{eq:u-hat}
    \hat U^T {\PTP}^{-\half} P^T L P{\PTP}^{-\half} \hat U
    =
    \DIAG(0, \lambda_2, \ldots, \lambda_k)
    =:
    \hat \Lambda.
  \end{equation}
  Next, take $U_1 = P {\PTP}^{-\half} \hat U$. The columns of $U_1$ have unit
  length and are orthogonal:
  \begin{equation*}
    U_1^T U_1
    =
    \hat U^T {\PTP}^{-\half} P^T
    P {\PTP}^{-\half} \hat U
    = \hat U^T \hat U
    = I.
  \end{equation*}
  Furthermore, we have that
  \begin{equation*}
    U_1^T L U_1
    =
    \hat U^T {\PTP}^{-\half} P^T L P{\PTP}^{-\half} \hat U
    =
    \hat \Lambda.
  \end{equation*}
  Now choose $U_2$ such that $U = \begin{pmatrix} U_1 & U_2 \end{pmatrix}$ is an
  orthogonal matrix and
  \begin{equation*}
    \Lambda
    :=
    U^T L U
    =
    \begin{pmatrix}
      \hat \Lambda & 0 \\
      0 & \bar \Lambda
    \end{pmatrix},
  \end{equation*}
  where $\bar \Lambda = \DIAG(\lambda_{k + 1}, \ldots, \lambda_N)$. It is easily
  verified that the first column of $U_1$, and thus the first column of $U$, is
  given by $\frac{1}{\sqrt{N}} \bbone_N$, where $\bbone_N$ is the $N$-vector of
  1's, a fact that we will use in the remainder of this paper. To compute the
  $\cH_2$-norm of $S$ we can use the result of Proposition~\ref{prop:Lyapunov}.
  It can be verified, using the fact that $A - \lambda_i B$ is Hurwitz for $i =
  2, 3 \ldots, N$, that
  \[
    {\cX}_+(I \otimes A - L \otimes B) = \bbone_N \otimes {\cX}_+(A).
  \]
  This immediately implies that ${\cX}_+(I \otimes A - L \otimes B) \subset
  \ker(L \otimes I)$. As a consequence, we have
  \begin{equation*}
    \Htwonorm{S}^2
    =
    \TR\PAREN*{\PAREN*{M^T \otimes E^T} X (M \otimes E)},
  \end{equation*}
  where $X$ is the unique positive semi-definite solution to the Lyapunov
  equation
  \begin{equation}\label{eq:x}
    \PAREN*{I \otimes A^T - L \otimes B^T} X
    + X (I \otimes A - L \otimes B)
    + L^2 \otimes I
    = 0
  \end{equation}
  with the property that ${\cX}_+(I \otimes A - L \otimes B) \subset \ker X$.
  In order to compute this solution $X$, premultiply~\eqref{eq:x} by $U^T
  \otimes I$ and postmultiply by $U \otimes I$, and substitute $Z = (U^T \otimes
  I) X (U \otimes I)$ to obtain
  \begin{equation}\label{eq:z}
    \PAREN*{I \otimes A^T - \Lambda \otimes B^T} Z
    + Z (I \otimes A - \Lambda \otimes B)
    + \Lambda^2 \otimes I
    = 0.
  \end{equation}
  Solving~\eqref{eq:z} we take $Z$ as
  \begin{equation*}
    Z
    =
    \DIAG(0, X_2, \ldots, X_N),
  \end{equation*}
  where $X_i$, for $i = 2, \ldots, N$, is the observability Gramian of the
  auxiliary system $(A - \lambda_i B, E, \lambda_i I)$ in~\eqref{eq:aux-system}.
  Next, $X := (U \otimes I) Z (U^T \otimes I)$ is a solution of the original
  Lyapunov equation, and it is easily verified that indeed ${\cX}_+(I \otimes A
  - L \otimes B) \subset \ker X$. Thus we obtain the following expression for
  the $\cH_2$-norm of $S$:
  \begin{equation}\label{eq:s-norm-identity}
    \begin{aligned}
      \Htwonorm{S}^2
      &=
      \TR\PAREN*{\PAREN*{M^T U \otimes E^T}
      \DIAG(0, X_2, \ldots, X_N) \PAREN*{U^T M \otimes E}}, \\
      &=
      \TR\PAREN*{\PAREN*{U^T M M^T U \otimes I}
      \DIAG(0, E^T X_2 E, \ldots, E^T X_N E)}.
    \end{aligned}
  \end{equation}
  Next, we compute the $\cH_2$-norm for the reduced system. Firstly, it can be
  verified that
  \[
    {\cX}_+(I \otimes A - \hat L \otimes B) = \bbone_k \otimes {\cX}_+(A)
  \]
  This implies that ${\cX}_+(I \otimes A - \hat L \otimes B) \subset \ker(LP
  \otimes I)$. By Proposition~\ref{prop:Lyapunov} we then have
  \begin{equation*}
    \Htwonorm[\big]{\hat S}^2
    =
    \TR\PAREN[\big]{\PAREN[\big]{\widehat{M}^T \otimes E^T} \widehat{X}
    \PAREN[\big]{\widehat{M} \otimes E}},
  \end{equation*}
  where $\hat X$ is the unique positive semi-definite solution to the Lyapunov
  equation
  \begin{equation}\label{eq:x1}
    \PAREN[\big]{I \otimes A^T - \hat L^T \otimes B^T} \hat X
    + \hat X (I \otimes A - \hat L \otimes B)
    + P^T L^2 P \otimes I
    = 0.
  \end{equation}
  with the property that ${\cX}_+(I \otimes A - \hat L \otimes B) \subset \ker
  \hat X$. In order to compute this solution, pre- and
  postmultiply~\eqref{eq:x1} by ${\PTP}^{-\half} \otimes I$ and substitute
  \begin{equation*}
    \widehat{Y}
    =
    \PAREN*{\PTP^{-\half} \otimes I} \widehat{X}
    \PAREN*{\PTP^{-\half} \otimes I}
  \end{equation*}
  to obtain
  \begin{equation}\label{eq:y}
    \begin{split}
      \PAREN*{I \otimes A^T - \bar L \otimes B^T} \widehat{Y}
      + \widehat{Y} \PAREN*{I \otimes A - \bar L \otimes B} \\
      \quad {} + \PTP^{-\half} P^T L^2 P \PTP^{-\half}
      \otimes I
      = 0.
    \end{split}
  \end{equation}
  Recall from Section~\ref{sec:partitions} that $L P = P \hat L$. From this it
  follows that
  \[
    {\PTP}^{-\half} P^T L^2 P {(P^T P)}^{-\half} = \bar L^2.
  \]
  Consequently, we can diagonalize the corresponding term in~\eqref{eq:y} by
  premultiplying by $\hat U^T \otimes I$ and postmultiplying by $\hat U \otimes
  I$, where $\hat U$ is as in~\eqref{eq:u-hat}. Next, we denote $\hat Z =
  (\hat{U}^T \otimes I) \hat Y (\hat U \otimes I)$ so that~\eqref{eq:y} reduces
  to
  \begin{equation*}
    \PAREN[\big]{I \otimes A^T - \widehat{\Lambda} \otimes B^T} \widehat{Z}
    + \widehat{Z} \PAREN[\big]{I \otimes A - \widehat{\Lambda} \otimes B}
    + \widehat{\Lambda}^2 \otimes I
    = 0,
  \end{equation*}
  which can be solved by taking
  \begin{equation*}
    \widehat{Z}
    =
    \DIAG(0, X_2, \ldots, X_k),
  \end{equation*}
  where again $X_i$, for $i = 2, \ldots, k$, is the observability Gramian of the
  auxiliary system $(A - \lambda_i B, E, \lambda_i I)$ in~\eqref{eq:aux-system}.
  Next,
  \[
    \hat X = \PAREN*{{\PTP}^\half \hat U \otimes I} \hat Z \PAREN*{\hat U^T
    {\PTP}^\half \otimes I}
  \]
  then satisfies~\eqref{eq:x1}, and it can be verified that ${\cX}_+(I \otimes A
  - \hat L \otimes B) \subset \ker \hat X$. Thus, the $\cH_2$-norm of $\hat S$
  is given by:
  \begin{equation}\label{eq:s-hat-norm-identity}
    \begin{aligned}
      \Htwonorm[\big]{\widehat{S}}^2
      &=
      \TR\Bigl(\PAREN*{\widehat{M}^T \PTP^\half \widehat{U} \otimes E^T}
      \DIAG(0, X_2, \ldots, X_k) \\
      &\qquad \quad
      \cdot \PAREN*{\widehat{U}^T \PTP^\half \widehat{M} \otimes E}\Bigr),
      \\
      &=
      \TR\Bigl(\PAREN*{\widehat{U}^T \PTP^\half \widehat{M} \widehat{M}^T
      \PTP^\half \widehat{U} \otimes I} \\
      &\qquad \quad
      \cdot \DIAG\PAREN*{0, E^T X_2 E, \ldots, E^T X_k E}\Bigr).
    \end{aligned}
  \end{equation}
  Using Lemma~\ref{lem:error-orthogonal}, and
  formulas~\eqref{eq:s-norm-identity} and~\eqref{eq:s-hat-norm-identity}, we
  compute
  \begin{equation}\label{eq:error-norm-trace}
    \begin{aligned}
      \Htwonorm[\big]{S - \widehat{S}}^2
      &= \TR\PAREN*{\PAREN*{U^T M M^T U \otimes I}
      \DIAG\PAREN*{0, E^T X_2 E, \ldots, E^T X_N E}} \\
      &\quad -
      \TR\Bigl(\PAREN*{\widehat{U}^T \PTP^\half \widehat{M}
      \widehat{M}^T \PTP^\half \widehat{U} \otimes I}\\
      &\qquad \qquad
      \cdot \DIAG\PAREN*{0, E^T X_2 E, \ldots, E^T X_k E}\Bigr) \\
      &= \TR\biggl(\PAREN*{
      \begin{pmatrix}
        U_1^T M M^T U_1 & U_1^T M M^T U_2 \\
        U_2^T M M^T U_1 & U_2^T M M^T U_2
      \end{pmatrix}
      \otimes I} \\
      &\qquad \quad
      \cdot \DIAG\PAREN*{0, E^T X_2 E, \ldots, E^T X_N E}\biggr) \\
      &\quad -
      \TR\PAREN*{\PAREN*{U_1^T M M^T U_1 \otimes I}
      \DIAG\PAREN*{0, E^T X_2 E, \ldots, E^T X_k E}} \\
      &=
      \TR\PAREN*{\PAREN*{U_2^T M M^T U_2 \otimes I}
      \DIAG\PAREN*{E^T X_{k + 1} E, \ldots, E^T X_N E}},
    \end{aligned}
  \end{equation}
  where the second equality follows from the fact that
  \begin{align*}
    \widehat{M}^T \PTP^\half \widehat{U}
    &=
    M^T P \PTP^{-1} \PTP^\half \widehat{U} \\
    &=
    M^T P \PTP^{-\half} \widehat{U} \\
    &=
    M^T U_1.
  \end{align*}
  Next, observe that~\eqref{eq:error-norm-trace} can be rewritten as
  \begin{equation*}
    \begin{aligned}
      \Htwonorm[\big]{S - \widehat{S}}^2
      &=
      \TR\PAREN*{\PAREN*{U_2^T M M^T U_2 \otimes I}
      \DIAG\PAREN*{E^T X_{k + 1} E, \ldots, E^T X_N E}} \\
      &=
      \TR\PAREN*{\PAREN*{U_2^T M M^T U_2}
      \DIAG\PAREN*{\TR\PAREN*{E^T X_{k+1} E}, \ldots, \TR\PAREN*{E^T X_N E}}} \\
      &=
      \TR\PAREN*{\PAREN*{U_2^T M M^T U_2}
      \DIAG\PAREN*{\Htwonorm{S_{\lambda_{k+1}}}^2, \ldots,
      \Htwonorm{S_{\lambda_N}}^2}},
    \end{aligned}
  \end{equation*}
  where $S_{\lambda_j}$ for $j = k + 1, \ldots, N$ is the transfer function of
  the auxiliary system~\eqref{eq:aux-system}. An upper bound for this expression
  is given by
  \begin{equation*}
    \begin{split}
      \TR\PAREN*{\PAREN*{U_2^T M M^T U_2}
      \DIAG\PAREN*{\Htwonorm{S_{\lambda_{k+1}}}^2, \ldots,
      \Htwonorm{S_{\lambda_N}}^2}}
      \leq
      S_{\max, \cH_2}^2 \TR\PAREN*{U_2^T M M^T U_2},
    \end{split}
  \end{equation*}
  where $S_{\max, \cH_2}^2 = \max_{k + 1 \le j \le N}
  \Htwonorm{S_{\lambda_j}}^2$. Furthermore, we have
  \begin{align*}
    \TR\PAREN*{U_2^T M M^T U_2}
    &=
    \TR\PAREN*{U^T M M^T U} - \TR\PAREN*{U_1^T M M^T U_1}
    \\
    &=
    m - \TR\PAREN*{P \PTP^{-1} P^T M M^T}.
  \end{align*}
  Since, by assumption, the partition $\pi$ is regularly formed, the matrix $P
  \PTP^{-1} P^T$ is a block diagonal matrix of the form
  \begin{equation*}
    P \PTP^{-1} P^T
    =
    \DIAG(P_1, P_2, \ldots, P_k).
  \end{equation*}
  It is easily verified that each $P_i$ is a $\card{C_i} \times \card{C_i}$
  matrix whose elements are all equal to $\frac{1}{\card{C_i}}$. The matrix $M
  M^T$ is a diagonal matrix whose diagonal entries are either $0$ or $1$. We
  then have that the $i$th column of $P \PTP^{-1} P^T M M^T$ is either equal to
  the $i$th column of $P \PTP^{-1} P^T$ if agent $i$ is a leader, or zero
  otherwise. It then follows that the diagonal elements of $P \PTP^{-1}P^T M
  M^T$ are either zero or $\frac{1}{\card{C_{k_i}}}$ if $i$ is part of the
  leader set, where $C_{k_i}$ is the cell containing agent $i$. Hence, we have
  \begin{equation*}
    \TR\PAREN*{U_1^T M M^T U_1}
    =
    \sum_{i = 1}^m \frac{1}{\card{C_{k_i}}},
  \end{equation*}
  and consequently
  \begin{equation*}
    \TR\PAREN*{U_2^T M M^T U_2}
    =
    m - \sum_{i = 1}^m \frac{1}{\card{C_{k_i}}}.
  \end{equation*}
  In conclusion, we have
  \begin{equation*}
    \Htwonorm[\big]{S - \widehat{S}}^2
    \leq
    S_{\max, \cH_2}^2 \sum_{i = 1}^m \PAREN*{1 - \frac{1}{\card{C_{k_i}}}},
  \end{equation*}
  which completes the proof of the first part of the theorem.

  We now prove the statement about the relative error. For this, we will
  establish a lower bound for $\Htwonorm{S}^2$. By~\eqref{eq:s-norm-identity} we
  have
  \begin{equation}\label{eq:s-norm-identity1}
    \begin{aligned}
      \Htwonorm{S}^2
      &=
      \TR\PAREN*{\PAREN*{M^T U \otimes E^T} \DIAG(0, X_2, \ldots, X_N)
      \PAREN*{U^T M \otimes E}} \\
      &=
      \TR\PAREN*{\PAREN*{U^T M M^T U \otimes I}
      \DIAG\PAREN*{0, E^T X_2 E, \ldots, E^T X_N E}} \\
      &=
      \TR\PAREN*{\PAREN*{U^T M M^T U}
      \DIAG\PAREN*{0, \TR\PAREN*{E^T X_2 E}, \ldots, \TR\PAREN*{E^T X_N E}}}.
    \end{aligned}
  \end{equation}
  The first column of $U$ spans the eigenspace corresponding to the eigenvalue
  $0$ of $L$ and hence must be equal to $u_1 = \frac{1}{\sqrt{N}} \bbone_N$. Let
  $\bar{U}$ be such that $U = \begin{pmatrix} u_1 & \bar{U} \end{pmatrix}$. It
  is then easily verified using~\eqref{eq:s-norm-identity1} that
  \begin{equation*}
    \begin{aligned}
      \Htwonorm{S}^2
      &=
      \TR\PAREN*{\PAREN*{\bar{U}^T M M^T \bar{U}}
      \DIAG\PAREN*{\TR\PAREN*{E^T X_2 E}, \ldots, \TR\PAREN*{E^T X_N E}}} \\
      &=
      \TR\PAREN*{\PAREN*{\bar{U}^T M M^T \bar{U}}
      \DIAG\PAREN*{\Htwonorm{S_{\lambda_{2}}}^2, \ldots,
      \Htwonorm{S_{\lambda_N}}^2}}.
    \end{aligned}
  \end{equation*}
  Finally, since
  \[
    \TR\PAREN*{\bar{U}^T M M^T \bar{U}}
    =
    \TR\PAREN*{M^T \bar{U} \bar{U}^T M}
    =
    \TR\PAREN*{M^T \PAREN*{U U^T - u_1 u_1^T} M}
    =
    m - \frac{m}{N},
  \]
  we obtain that $\Htwonorm{S}^2 \geq m \PAREN*{1 - \frac{1}{N}} S_{\min,
  \cH_2}^2$. This then yields the upper bound for the relative error as claimed.
\end{proof}

\begin{RM}
  Note that by our labeling of the eigenvalues of $L$, in the formulation of
  Theorem~\ref{thm:main}, we have that $\sigma(L) \setminus \sigma(\hat L)$ is
  equal to $\{\lambda_{k + 1}, \ldots, \lambda_N\}$ used in the proof. We stress
  that this should not be confused with the notation often used in the
  literature, where the $\lambda_i$'s are labeled in increasing order.
\end{RM}
\begin{RM}\label{rem:single integrator case}
  For the special case that the agents are single integrators (so $n = 1$, $A =
  0$, $B = 1$, and $E = 1$) it is easily seen that $S_{\max, \cH_2} =
  \frac{1}{2} \max \{ \lambda \mid \lambda \in \sigma(L) \setminus \sigma(\hat
  L) \}$ and $S_{\min, \cH_2} = \frac{1}{2} \min \{ \lambda \mid \lambda \in
  \sigma(L), ~\lambda \neq 0\}$. Thus, in the single integrator case the
  corresponding a priori upper bounds explicitly involve the Laplacian
  eigenvalues.

  As noted in the Introduction, the single integrator case was also studied
  in~\cite{morMonTC14} for the slightly different set up that the output
  equation in the original network~\eqref{eq:extended-system} is taken as $y =
  (W^\half R^T \otimes I) x$ instead of $y = (L \otimes I) x$. Here, $R$ is the
  incidence matrix of the graph and $W$ the diagonal matrix with the edge
  weights on the diagonal (in other words, $L = R W R^T$). It was shown in
  \cite{morMonTC14} that in that case the absolute and relative approximation
  errors admit the explicit expressions
  \begin{equation*}
    \Htwonorm{S - \widehat{S}}^2 =
    \frac{1}{2} \sum^m_{i = 1} \PAREN*{1 - \frac{1}{\card{C_{k_i}}}},
  \end{equation*}
  and
  \begin{equation*}
    \frac{\Htwonorm{S - \hat S}^2}{\Htwonorm{S}^2} =
    \frac{\sum^m_{i = 1} \PAREN*{1 - \frac{1}{\card{C_{k_i}}}}}{m \PAREN*{1 -
    \frac{1}{N}}}.
  \end{equation*}
\end{RM}

\section{\texorpdfstring{$\boldsymbol{\cH_\infty}$}{Hinf}-error bounds}
\label{sec:H_inf}

In the previous section, we obtained a priori upper bounds for the approximation
error in terms of the $\cH_2$-norm of the difference between the transfer
functions of the original network and its reduced order approximation. In the
present section, we express the error in terms of the $\cH_\infty$-norm.

\subsection{The single integrator case}

In this first subsection, we consider the special case that the agent dynamics
is a single integrator system. In this case, we have $A = 0$, $B = 1$, and $E =
1$ and the original system~\eqref{eq:extended-system} then reduces to
\begin{equation}\label{eq:extended-system-single}
  \begin{aligned}
    \dot{x} & = -L x + M u, \\
    y & = L x.
  \end{aligned}
\end{equation}
The state space dimension of~\eqref{eq:extended-system-single} is then simply
$N$, the number of agents. For a given partition $\pi = \{C_1, C_2, \ldots,
C_k\}$, the reduced system~\eqref{eq:reduced-system} is now given by
\begin{equation*}
  \begin{aligned}
    \dot{\widehat{x}} & = -\widehat{L} \widehat{x} + \widehat{M} u, \\
    \widehat{y} & = L P \widehat{x},
  \end{aligned}
\end{equation*}
where $P = P(\pi)$ is again the characteristic matrix of $\pi$ and $\hat x \in
\R^k$. The transfer functions $S$ and $\widehat{S}$, of the original and
reduced system respectively, are given by
\begin{align*}
  S(s)
  & =
  L {(s I_N + L)}^{-1} M, \\
  \widehat{S}(s)
  & =
  L P \PAREN[\big]{s I_k + \widehat{L}}^{-1} \widehat{M}.
\end{align*}
We then have the following explicit expressions for the $\cH_\infty$ model
reduction error:
\begin{TM}\label{TM:single_int_Hinf}
  Let $\pi$ be an almost equitable partition of the graph $\graph$. If the network with single
  integrator agent dynamics is clustered according to $\pi$, then the
  $\cH_\infty$-error is given by
  \begin{align*}
    \Hinfnorm[\big]{S - \hat{S}}^2
    =
    \begin{cases}
      \max\limits_{1 \le i \le m} \PAREN*{1 - \frac{1}{\card{C_{k_i}}}} &
      \text{if the leaders are in different cells,} \\
      1 & \text{otherwise},
    \end{cases}
  \end{align*}
  where $C_{k_i}$ is the set of cellmates of leader $i$ for some $k_i \in \{1,
  2, \ldots, k\}$. Furthermore, since $\Hinfnorm{S} = 1$, the relative and
  absolute $\cH_\infty$-errors coincide.
\end{TM}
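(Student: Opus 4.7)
The plan is to reduce both $\Hinfnorm{S-\hat S}$ and $\Hinfnorm{S}$ to DC-gain norms via Lemma~\ref{LM:suff_cond}, and then evaluate those DC gains explicitly using the AEP structure. I would first form the state-space realization of the error system by stacking the two states:
\begin{equation*}
  A_e = \DIAG(-L, -\hat L), \quad
  B_e = \begin{pmatrix} M \\ \hat M \end{pmatrix}, \quad
  C_e = \begin{pmatrix} L & -LP \end{pmatrix}.
\end{equation*}
The AEP identity $LP = P\hat L$ (Lemma~\ref{lem:aep} and the remark following it) gives
\begin{equation*}
  C_e A_e
  = \begin{pmatrix} -L^2 & LP\hat L \end{pmatrix}
  = \begin{pmatrix} -L^2 & L^2 P \end{pmatrix}
  = (-L) C_e,
\end{equation*}
so $X := -L$ is symmetric and fulfils the hypothesis of Lemma~\ref{LM:suff_cond}. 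The only eigenvalue $0$ of $A_e$ does not produce a pole of the error system because $L \bbone_N = 0$ and $LP \bbone_k = L \bbone_N = 0$. The same choice $X = -L$ works for the realization of $S$ itself. Hence $\Hinfnorm{S - \hat S} = \twonorm{(S-\hat S)(0)}$ and $\Hinfnorm{S} = \twonorm{S(0)}$.

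To compute the DC gains, Lemma~\ref{LM:H0} gives $S(0) = L L^+ M = (I - \tfrac{1}{N} \bbone_N \bbone_N^T) M$ directly, since $-L$ is symmetric and $\KER L \subset \KER L$ trivially. For the error DC gain, I would work with the expression $\Delta(s) = LQ (sI + (Q^TQ)^{-1} Q^T L Q)^{-1} (Q^TQ)^{-1} Q^T M$ derived inside the proof of Lemma~\ref{lem:error-orthogonal}. Passing to $\tilde Q := Q (Q^TQ)^{-\half}$ (whose columns form an orthonormal basis of $\IM Q = (\IM P)^\perp$) rewrites this as $\Delta(s) = L \tilde Q (sI + \tilde Q^T L \tilde Q)^{-1} \tilde Q^T M$. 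Because $\IM Q$ is $L$-invariant (by AEP and symmetry of $L$) and contains no $0$-eigenvector, $\tilde Q^T L \tilde Q$ is symmetric positive definite, and $L \tilde Q = \tilde Q (\tilde Q^T L \tilde Q)$. Substituting collapses $\Delta(0) = \tilde Q \tilde Q^T M = (I - \Pi) M$, where $\Pi := P (P^TP)^{-1} P^T$ is the orthogonal projector onto $\IM P$.

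What remains is a spectral computation. Both $(I - \Pi) M$ and $(I - \tfrac{1}{N} \bbone_N \bbone_N^T) M$ have squared operator norm equal to the top eigenvalue of their Gram matrices $M^T (I - \Pi) M$ and $I_m - \tfrac{1}{N} J_m$, where $J_m$ denotes the all-ones matrix. Ordering the leaders by cluster, $M^T (I - \Pi) M$ is block diagonal: a cluster $C$ containing $\ell$ leaders contributes the $\ell \times \ell$ block $I_\ell - \tfrac{1}{\card{C}} J_\ell$, whose eigenvalues are $1 - \ell/\card{C}$ (with eigenvector $\bbone_\ell$) and $1$ with multiplicity $\ell - 1$. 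If every leader is alone in its cell, then each block is the scalar $1 - 1/\card{C_{k_i}}$ and the top eigenvalue is $\max_i (1 - 1/\card{C_{k_i}})$, giving the first branch. If any cell contains at least two leaders, the associated block has $1$ as an eigenvalue and the top eigenvalue of the entire Gram is $1$, giving the second branch. The same analysis applied to $I_m - \tfrac{1}{N} J_m$ yields $\Hinfnorm{S}^2 = 1$, from which the absolute and relative $\cH_\infty$-errors coincide.

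The main obstacle is verifying the intertwining relation $C_e A_e = X C_e$ with a symmetric $X$: it rests decisively on the AEP identity $LP = P\hat L$, without which no such symmetric $X$ exists and the reduction to DC gains breaks down. A secondary subtlety is that $\hat L$ is itself not symmetric, so Lemma~\ref{LM:H0} cannot be applied directly to $\hat S$; routing the error through $\Delta(s)$ with the orthonormalization $\tilde Q$ sidesteps this issue while keeping the computation explicit.
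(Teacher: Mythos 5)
Your proof is correct, and its skeleton coincides with the paper's: both reduce $\Hinfnorm[\big]{S-\hat S}$ and $\Hinfnorm{S}$ to DC gains via Lemma~\ref{LM:suff_cond} with the same intertwining matrix $X=-L$ (resting on the AEP identity $LP=P\hat L$), both arrive at $\Serr(0)=\PAREN[\big]{I_N-P\PTP^{-1}P^T}M$ and $S(0)=\PAREN[\big]{I_N-\tfrac{1}{N}\bbone_N\bbone_N^T}M$, and both finish with the same spectral analysis of $I_m-M^TP\PTP^{-1}P^TM$ (you read off $\lambda_{\max}$ of the block-diagonal Gram matrix directly, the paper equivalently computes $1-\lambda_{\min}\PAREN[\big]{M^TP\PTP^{-1}P^TM}$). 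Where you genuinely diverge is in evaluating the error DC gain. The paper symmetrizes the reduced Laplacian to $\bar L=\PTP^{\half}\hat L\PTP^{-\half}$ so that the error realization has a symmetric state matrix, applies Lemma~\ref{LM:H0} to that realization, and extracts $\Serr(0)$ through Moore--Penrose inverses and the eigendecomposition $U$. You instead keep the nonsymmetric $\DIAG(-L,-\hat L)$ for the norm-equals-DC-gain step (harmless, since Lemma~\ref{LM:suff_cond} only requires $X$ symmetric, not $A$) and compute $\Serr(0)$ from the truncation residual $\Delta(s)=L\tilde Q\,(sI+\tilde Q^TL\tilde Q)^{-1}\tilde Q^TM$ appearing in the proof of Lemma~\ref{lem:error-orthogonal}, collapsing it at $s=0$ via the $L$-invariance of ${(\IM P)}^{\perp}$ and the positive definiteness of $\tilde Q^TL\tilde Q$; this is a clean shortcut that avoids both the symmetrization and the pseudoinverse bookkeeping. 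One caveat you share with the paper rather than introduce: $\lambda_{\max}\PAREN[\big]{I_m-\tfrac{1}{N}\bbone_m\bbone_m^T}=1$ only for $m\ge 2$; for a single leader both computations actually yield $\Hinfnorm{S}^2=1-\tfrac{1}{N}$.
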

\begin{RM}
  We see that the $\cH_\infty$-error lies in the interval $[0, 1]$. The error is
  maximal ($= 1$) if and only if two or more leader nodes occupy one and the
  same cell. The error is minimal ($= 0$) if and only if each leader node
  occupies a different cell, and is the only node in this cell. In general, the
  error decreases if the number of cellmates of the leaders decreases.
\end{RM}
\begin{proof}
  To simplify notation, denote $\Serr(s) = S(s) - \hat S(s)$. Note that both
  $S$ and $\hat S$ have all poles in the open left half plane. We now first show
  that since $\pi$ is an AEP we have
  \begin{equation}\label{eq:Hinfty-error-equality}
    \Hinfnorm{\Serr}
    =
    \twonorm{\Serr(0)}.
  \end{equation}
  First note that $\hat S(s) = L P \PTP^{-\frac{1}{2}} {(s I_k + \bar L)}^{-1}
  \PTP^{\frac{1}{2}} \hat M$, where the symmetric matrix $\bar L$ is given
  by~\eqref{eq:L-bar}. Thus, a state space representation for the error system
  is given by
  \begin{equation}\label{eq:error-system-single}
    \begin{aligned}
      \dot x_e
      &=
      \begin{pmatrix}
        -L & 0 \\
        0 & -\bar L
      \end{pmatrix}
      x_e
      +
      \begin{pmatrix}
        M \\
        \PTP^{\frac{1}{2}} \hat M
      \end{pmatrix} u \\
      e &=
      \begin{pmatrix}
        L & -L P \PTP^{-\frac{1}{2}}
      \end{pmatrix}
      x_e.
    \end{aligned}
  \end{equation}
  Next, we show that~\eqref{eq:Hinfty-error-equality} holds by applying
  Lemma~\ref{LM:suff_cond} to system~\eqref{eq:error-system-single}. Indeed,
  with $X = -L$ we have
  \begin{align*}
    &
    \begin{pmatrix}
      L & -L P \PTP^{-\frac{1}{2}}
    \end{pmatrix}
    \begin{pmatrix}
      -L & 0 \\
      0 & -\bar L
    \end{pmatrix} \\
    &\qquad =
    \begin{pmatrix}
      -L^2 & L P \PTP^{-\frac{1}{2}} \bar L
    \end{pmatrix} \\
    &\qquad =
    \begin{pmatrix}
      -L^2 & LP \hat L \PTP^{-\frac{1}{2}}
    \end{pmatrix} \\
    &\qquad =
    \begin{pmatrix}
      -L^2 & L^2 P \PTP^{-\frac{1}{2}}
    \end{pmatrix}
     = X
    \begin{pmatrix}
      L & -L P \PTP^{-\frac{1}{2}}
    \end{pmatrix},
  \end{align*}
  and from Lemma~\ref{LM:suff_cond} it then immediately follows that
  $\Hinfnorm{\Serr} = \twonorm{\Serr(0)}$. To compute $\twonorm{\Serr(0)}$ we
  apply Lemma~\ref{LM:H0} to system~\eqref{eq:error-system-single}. First, it is
  easily verified that
  \begin{equation*}
    \ker
      \begin{pmatrix}
        -L & 0 \\
        0 & -\bar L
      \end{pmatrix}
    \subset
    \ker
      \begin{pmatrix}
        L & -L P \PTP^{-\frac{1}{2}}
      \end{pmatrix}.
  \end{equation*}
  By applying Lemma~\ref{LM:H0} we then obtain
  \begin{equation}\label{eq:Serr0}
    \begin{aligned}
      \Serr(0)
      &=
      \begin{pmatrix}
        L & -L P \PTP^{-\frac{1}{2}}
      \end{pmatrix}
      \begin{pmatrix}
        L & 0 \\
        0 & \bar L
      \end{pmatrix}^+
      \begin{pmatrix}
        M \\ \PTP^{\frac{1}{2}} \hat M
      \end{pmatrix} \\
      &=
      L \PAREN*{L^+ - P \PTP^{-\frac{1}{2}} \bar{L}^+ \PTP^{-\frac{1}{2}}
      P^T} M.
    \end{aligned}
  \end{equation}
  Recall that $\hat U$ in~\eqref{eq:u-hat} is an orthogonal matrix that
  diagonalizes $\bar L$ and that $U_1 = P \PTP^{-\half} \hat U$. Then $\bar L^+
  = \hat U \hat \Lambda^+ \hat U^T$. Thus we have
  \begin{align*}
    P \PTP^{-\half} \bar{L}^+ \PTP^{-\half} P^T
    =
    U_1 \hat{\Lambda}^+ U_1^T.
  \end{align*}
  Next, we compute
  \begin{equation}\label{eq:LL-plus}
    \begin{aligned}
      L L^+
      &= U \Lambda U^T U \Lambda^+ U^T \\
      &= U \Lambda \Lambda^+ U^T \\
      &= I_N - \frac{1}{N} \bbone_N \bbone_N^T,
    \end{aligned}
  \end{equation}
  where the last equality follows from the fact that the first column of $U$ is
  $\frac{1}{\sqrt{N}} \bbone_N$. Next, observe that
  \begin{equation}\label{eq:LLambda-hat}
    \begin{aligned}
      L U_1 \hat \Lambda^+ U_1^T
      &= U \Lambda U^T U_1 \hat \Lambda^+ U_1^T \\
      &= U_1 \hat \Lambda \hat \Lambda^+ U_1^T \\
      &= U_1 U_1^T - \frac{1}{N} \bbone_N \bbone_N^T \\
      &= P \PTP^{-1} P^T - \frac{1}{N} \bbone_N \bbone_N^T.
    \end{aligned}
  \end{equation}
  Combining~\eqref{eq:LL-plus} and~\eqref{eq:LLambda-hat} with~\eqref{eq:Serr0},
  we obtain
  \begin{equation*}
    \Serr(0) = \PAREN*{I_N - P \PTP^{-1} P^T} M.
  \end{equation*}
  From~\eqref{eq:Hinfty-error-equality} then, we have that the
  $\cH_\infty$-error is given by
  \begin{equation}\label{eq:lam_max_min}
    \begin{aligned}
      \Hinfnorm[\big]{S - \hat{S}}^2
      & =
      \lambda_{\max} \PAREN*{{\Serr(0)}^T \Serr(0)}
      \\
      & =
      \lambda_{\max} \PAREN*{M^T \PAREN*{I_N - P \PTP^{-1} P^T}^2 M}
      \\
      & =
      \lambda_{\max} \PAREN*{I_m - M^T P \PTP^{-1} P^T M}
      \\
      & =
      1 - \lambda_{\min} \PAREN*{M^T P \PTP^{-1} P^T M}.
    \end{aligned}
  \end{equation}
  All that is left now is to compute the minimal eigenvalue of $M^T P \PTP^{-1}
  P^T M$. Again let $\{v_1, v_2, \ldots, v_m\}$ be the set of leaders and note
  that $M$ satisfies
  \begin{equation*}
    M
    =
    \begin{pmatrix}
      e_{v_1} & e_{v_2} & \cdots & e_{v_m}
    \end{pmatrix}.
  \end{equation*}
  Again, without loss of generality, assume that $\pi$ is regularly formed. Then
  the matrix $P \PTP^{-1} P^T$ is block diagonal where each diagonal block
  $P_i$
  is a $\card{C_i} \times \card{C_i}$ matrix whose entries are all
  $\frac{1}{\card{C_i}}$. Let $k_i \in \{1, 2, \ldots, k\}$ be such that $v_i
  \in C_{k_i}$. If all the leaders are in different cells, then
  \begin{equation*}
    M^T P \PTP^{-1} P^T M
    =
    \DIAG\PAREN*{\frac{1}{\card{C_{k_1}}}, \frac{1}{\card{C_{k_2}}}, \ldots,
    \frac{1}{\card{C_{k_m}}}},
  \end{equation*}
  and so
  \begin{equation}\label{eq:lam_min1}
    \lambda_{\min} \PAREN*{M^T P \PTP^{-1} P^T M}
    =
    \min_{1 \le i \le m} \frac{1}{\card{C_{k_i}}}.
  \end{equation}
  Now suppose that two leaders $v_i$ and $v_j$ are cellmates. Then we have
  \begin{align*}
    M^T P \PTP^{-1} P^T M (e_i - e_j)
    & =
    M^T P \PTP^{-1} P^T (e_{v_i} - e_{v_j})
    = 0.
  \end{align*}
  which together with $M^T P \PTP^{-1} P^T M \succeq 0$ implies
  \begin{align}
    \lambda_{\min}\PAREN*{M^T P \PTP^{-1} P^T M}
    =
    0. \label{eq:lam_min2}
  \end{align}
  From~\eqref{eq:lam_max_min},~\eqref{eq:lam_min1}, and~\eqref{eq:lam_min2}, we
  find the absolute $\cH_\infty$-error. To find the relative $\cH_\infty$-error,
  we compute $\Hinfnorm{S}$ by applying Lemma~\ref{LM:suff_cond} and
  Lemma~\ref{LM:H0} to the original system~\eqref{eq:extended-system-single}.
  Combined with~\eqref{eq:LL-plus}, this results in the $\cH_\infty$-norm of the
  original system:
  \begin{align*}
    \Hinfnorm{S}^2
    &= \lambda_{\max}\PAREN*{{S(0)}^T S(0)}
    =
    \lambda_{\max}\PAREN*{M^T \PAREN*{I_N - \frac{1}{N} \bbone_N \bbone_N^T} M}
    = 1.
  \end{align*}
\end{proof}

\subsection{The general case with symmetric agent dynamics}

In this subsection, we deal with the case that the agent dynamics is given by an
arbitrary multivariable system. The original and the reduced network are again
given by~\eqref{eq:extended-system} and~\eqref{eq:reduced-system}, respectively.
As in the proof of Theorem~\ref{TM:single_int_Hinf} we will rely heavily on
Lemma~\ref{LM:H0} to compute the $\cH_\infty$-error. Since Lemma~\ref{LM:H0}
relies on a symmetry argument, we will need to assume that the matrices $A$ and
$B$ are both symmetric, which will be a standing assumption in the remainder of
this section.

The main theorem of this section establishes an a priori upper bound for the
$\cH_\infty$-norm of the approximation error in the case that we cluster
according to an AEP. Again, an important role is played by the $N - 1$
auxiliary systems~\eqref{eq:aux-system} with $\lambda$ ranging over the nonzero
eigenvalues of the Laplacian $L$. Again, let $S_{\lambda}(s) = \lambda {(sI - A
+ \lambda B)}^{-1}E$ be their transfer functions We assume that the original
network~\eqref{eq:extended-system} is synchronized, so that all of the $A -
\lambda B$ are Hurwitz. We again use $S$, $\widehat{S}$, and $\Serr$ to denote
the relevant transfer functions.

We have the following theorem:
\begin{TM}
  Consider the network~\eqref{eq:extended-system} and assume that $A$ and $B$
  are symmetric matrices. Assume the network is synchronized. Let $\pi$ be an
  almost equitable partition of the graph $\graph$. The $\cH_\infty$-error when clustering
  $\graph$ according to $\pi$ then satisfies
  \begin{equation*}
    \Hinfnorm[\big]{S - \hat{S}}^2 \le
    \begin{cases}
      S_{\max, \cH_\infty}^2 \max\limits_{1 \le i \le m} \PAREN*{1 -
      \frac{1}{\card{C_{k_i}}}} &
      \text{if the leaders are in different cells,} \\
      S_{\max, \cH_\infty}^2 & \text{otherwise}
    \end{cases}
  \end{equation*}
  and
  \begin{equation*}
    \frac{\Hinfnorm[\big]{S - \hat{S}}^2}{\Hinfnorm{S}^2} \le
    \begin{cases}
      \frac{S_{\max, \cH_\infty}^2}{S_{\min, \cH_\infty}^2} \max\limits_{1 \le i
      \le m} \PAREN*{1 - \frac{1}{\card{C_{k_i}}}} &
      \text{if the leaders are in different cells,} \\
      \frac{S_{\max, \cH_\infty}^2}{S_{\min, \cH_\infty}^2} & \text{otherwise},
    \end{cases}
  \end{equation*}
  where
  \begin{equation}\label{eq:Smax,infinity}
    S_{\max, \cH_\infty}
    :=
    \max_{\lambda \in \sigma(L) \setminus \sigma(\widehat{L})}
    \Hinfnorm{S_\lambda}
  \end{equation}
  and
  \begin{equation}\label{eq:Smin,infinity}
    S_{\min, \cH_\infty}
    :=
    \min_{\lambda \in \sigma(L) \setminus \{0\}}
    \sigma_{\min}\PAREN*{S_\lambda(0)},
  \end{equation}
  with $S_\lambda$ the transfer function of the auxiliary
  system~\eqref{eq:aux-system}.
\end{TM}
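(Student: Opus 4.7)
The plan is to mirror the two-step strategy used in the proof of Theorem~\ref{TM:single_int_Hinf}, carried out now in the tensor-product setting enabled by the symmetry of $A$ and $B$ and the AEP assumption. As in the proof of Theorem~\ref{thm:main}, let $U = \begin{pmatrix} U_1 & U_2\end{pmatrix}$ be an orthogonal matrix with $U_1 = P \PTP^{-\half} \hat{U}$, where $\hat U$ diagonalizes $\bar L$, so that $U^T L U = \DIAG(\hat\Lambda, \bar\Lambda)$. Because $\pi$ is an AEP, Lemma~\ref{lem:aep} gives $L$-invariance of $\IM U_2$, hence $L U_2 = U_2 \bar\Lambda$. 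Arguing as in Lemma~\ref{lem:error-orthogonal} but with the orthonormal columns $P \PTP^{-\half}$ and $U_2$, the error $\Serr := S - \hat S$ is realized by the triple $\PAREN{I \otimes A - \bar\Lambda \otimes B,\ U_2^T M \otimes E,\ U_2 \bar\Lambda \otimes I}$, whose state matrix is symmetric by symmetry of $A$, $B$, and $\bar\Lambda$. Expanding blockwise along the diagonal of $\bar\Lambda$ then yields the explicit representation
\begin{equation*}
  \Serr(s) = \sum_{j = k+1}^N u_j u_j^T M \otimes S_{\lambda_j}(s),
\end{equation*}
where $u_j$ denotes the $j$-th column of $U$.

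The second step is to reduce $\Hinfnorm{\Serr}$ to $\twonorm{\Serr(0)}$, which I would do by applying Lemma~\ref{LM:suff_cond} to this realization with the symmetric choice $X = I \otimes A - L \otimes B$: a short calculation using $L U_2 = U_2 \bar\Lambda$ and the symmetry of $A$ and $B$ verifies the identity required by the lemma. Evaluating the display at $s = 0$, applying it to a generic input $v = \VEC(V)$ with $V \in \R^{r \times m}$, and using orthonormality of the $u_j$'s gives
\begin{equation*}
  \norm{\Serr(0) v}^2
  = \sum_{j = k+1}^N \norm{S_{\lambda_j}(0) V a_j^T}^2
  \le S_{\max, \cH_\infty}^2 \TR\PAREN*{V^T V M^T U_2 U_2^T M},
\end{equation*}
with $a_j := u_j^T M$. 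Optimizing over $\Fnorm{V} = 1$ and rewriting $M^T U_2 U_2^T M = I_m - M^T P \PTP^{-1} P^T M$ reduces the task to computing $\lambda_{\max}\PAREN{I_m - M^T P \PTP^{-1} P^T M}$; this is precisely the quantity identified in the proof of Theorem~\ref{TM:single_int_Hinf} to equal $\max_i \PAREN{1 - 1/\card{C_{k_i}}}$ when the leaders occupy distinct cells, and $1$ otherwise. This gives the absolute bound.

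For the relative bound, the same argument applied to the original network produces $S(0) = \sum_{j = 2}^N u_j u_j^T M \otimes S_{\lambda_j}(0)$. A matching lower bound on $\twonorm{S(0)}$ is obtained by selecting a rank-one test input $v = \VEC(x y^T)$ with $\norm{x} = \norm{y} = 1$, $y$ orthogonal to $\bbone_m$, and $x$ aligned with a smallest right singular direction of $S_{\lambda_{j^*}}(0)$ for the index $j^*$ realizing $S_{\min, \cH_\infty}$; this yields $\Hinfnorm{S}^2 \ge S_{\min, \cH_\infty}^2$ and the claimed relative bound follows by division.

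The main obstacle I foresee is precisely this last lower-bound step: whereas the upper bound falls out cleanly from orthogonality of the $u_j$'s, producing a matching lower bound on $\Hinfnorm{S}$ requires a coordinated choice of test direction and is most transparent under the mild hypothesis $m \ge 2$ (so that a $y \perp \bbone_m$ exists); without it, one must either impose additional structure or accept a benign $(N-1)/N$ factor on the denominator.
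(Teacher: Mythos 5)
Your proof is correct and rests on the same three pillars as the paper's: Lemma~\ref{LM:suff_cond} with the symmetric choice $X = I_N \otimes A - L \otimes B$ to collapse $\Hinfnorm{\Serr}$ to $\twonorm{\Serr(0)}$, the orthogonal matrix $U = \begin{pmatrix} U_1 & U_2 \end{pmatrix}$ adapted to the AEP, and the eigenvalue computation for $I_m - M^T P \PTP^{-1} P^T M$ recycled from Theorem~\ref{TM:single_int_Hinf}. The central computation, however, travels a genuinely different road. The paper never writes $\Serr$ in closed form: it invokes the orthogonality ${\hat S}{(0)}^T \Serr(0) = 0$ inherited from Lemma~\ref{lem:error-orthogonal} to get ${\Serr(0)}^T \Serr(0) = {S(0)}^T S(0) - {\hat S}{(0)}^T \hat S(0)$, and then evaluates each Gram matrix separately through the pseudo-inverse formula of Lemma~\ref{LM:H0}. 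You instead realize $\Serr$ directly on the $L$-invariant complement $\IM U_2$ (the $Q = U_2$ specialization of the decomposition in Lemma~\ref{lem:error-orthogonal}) and read off the explicit sum $\sum_{j=k+1}^{N} u_j u_j^T M \otimes S_{\lambda_j}(s)$; this makes Lemma~\ref{LM:H0} and the subtraction unnecessary, and replaces the paper's Loewner-order estimates with test-vector estimates that land on the same quantity $\lambda_{\max}\PAREN*{I_m - M^T P \PTP^{-1} P^T M}$. What your route buys is transparency (the error is visibly a superposition of the discarded modal systems $S_{\lambda_j}$ weighted by $u_j u_j^T M$); what the paper's route buys is reuse of machinery already set up for the $\cH_2$ bound. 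Your closing caveat is also well taken and in fact applies to the paper's own argument: the lower bound $\Hinfnorm{S}^2 \geq S_{\min, \cH_\infty}^2$ is derived from $\lambda_{\max}\PAREN*{M^T \PAREN*{I_N - \frac{1}{N}\bbone_N \bbone_N^T} M} = \lambda_{\max}\PAREN*{I_m - \frac{1}{N}\bbone_m \bbone_m^T}$, which equals $1$ only when $m \geq 2$; for a single leader both arguments establish only $S_{\min, \cH_\infty}^2 \PAREN*{1 - \frac{1}{N}}$, a point the paper passes over silently.
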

\begin{RM}
  The absolute $\cH_\infty$-error thus lies in the interval $[0, S_{\max,
  \cH_\infty}]$ with $S_{\max, \cH_\infty}$ the maximum over the
  $\cH_\infty$-norms of the transfer functions $S_{\lambda}$ with $\lambda \in
  \sigma(L) \setminus \sigma(\widehat{L})$. The error is minimal ($= 0$) if each
  leader node occupies a different cell, and is the only node in this cell. In
  general, the upper bound decreases if the number of cellmates of the leaders
  decreases.
\end{RM}
\begin{proof}
  First note that the transfer function $\hat S$ of the reduced
  network~\eqref{eq:reduced-system} is equal to
  \begin{equation}\label{eq:alt}
    \begin{aligned}
      \hat S(s)
      &=
      \PAREN*{LP \PTP^{-\frac{1}{2}} \otimes I_n} \PAREN*{sI - I_k \otimes A +
      \bar L \otimes B}^{-1}
      \PAREN*{\PTP^{\frac{1}{2}} \hat M \otimes E},
    \end{aligned}
  \end{equation}
  with the symmetric matrix $\bar L$ given by~\eqref{eq:L-bar}. Analogous to the
  proof of Theorem~\ref{TM:single_int_Hinf}, we first apply
  Lemma~\ref{LM:suff_cond} to the error system
  \begin{equation*}
    \begin{aligned}
      \dot x_e
      &=
      \begin{pmatrix}
        I_N \otimes A - L \otimes B & 0 \\
        0 & I_k \otimes A - \bar L \otimes B
      \end{pmatrix}
      x_e
      +
      \begin{pmatrix}
        M \otimes E \\
        \PTP^{\frac{1}{2}} \hat M \otimes E
      \end{pmatrix}
      u \\
      e
      &=
      \begin{pmatrix}
        L \otimes I_n & -L P \PTP^{-\frac{1}{2}} \otimes I_n
      \end{pmatrix}
      x_e,
    \end{aligned}
  \end{equation*}
  with transfer function $\Serr$.
  Take $X = I_N \otimes A - L \otimes B$. We then have
  \begin{align*}
    &
    \begin{pmatrix}
      L \otimes I_n & -L P \PTP^{-\frac{1}{2}} \otimes I_n
    \end{pmatrix}
    \begin{pmatrix}
      I_N \otimes A - L \otimes B & 0 \\
      0 & I_k \otimes A -\bar L \otimes B
    \end{pmatrix} \\
    & \qquad = X
    \begin{pmatrix}
      L \otimes I_n & -L P \PTP^{-\frac{1}{2}} \otimes I_n
    \end{pmatrix}.
  \end{align*}
  From Lemma~\ref{LM:suff_cond}, we then obtain that
  $\Hinfnorm{\Serr}
  = \twonorm{\Serr(0)}
  = \lambda_{\max} \PAREN*{{\Serr(0)}^T \Serr(0)}^{\frac{1}{2}}$.

  In the proof of Lemma~\ref{lem:error-orthogonal}, it was shown that
  \begin{align*}
    {\hat S}{(-s)}^T \Serr(s) = {\hat S}{(-s)}^T (S(s) - \hat S(s)) = 0.
  \end{align*}
  Since all transfer
  functions involved are stable, in particular this holds for $s = 0$. We then
  have that $\widehat{S}{(0)}^T (S(0) - \widehat{S}(0)) = 0$, i.e.\
  $\widehat{S}{(0)}^T S(0) = \widehat{S}{(0)}^T \widehat{S}(0)$. By transposing,
  we also have ${S(0)}^T \widehat{S}(0) = \widehat{S}{(0)}^T \widehat{S}(0)$.
  Therefore,
  \begin{align*}
    {\Serr(0)}^T \Serr(0)
    & =
    \PAREN[\big]{S(0) - \widehat{S}(0)}^T (S(0) - \widehat{S}(0)) \\
    & =
    {S(0)}^T S(0) - {S(0)}^T \widehat{S}(0) - \widehat{S}{(0)}^T S(0)
    + \widehat{S}{(0)}^T \widehat{S}(0) \\
    & =
    {S(0)}^T S(0) - \widehat{S}{(0)}^T \widehat{S}(0).
  \end{align*}
  By applying Lemma~\ref{LM:H0} to system~\eqref{eq:extended-system}, we obtain
  \begin{equation}\label{eq:s0s0}
    \begin{aligned}
      {S(0)}^T S(0)
      & =
      \PAREN*{M^T \otimes E^T} {(I_N \otimes A - L \otimes B)}^+
      \PAREN*{L^2 \otimes I_n} \\
      & \qquad \cdot
      {(I_N \otimes A - L \otimes B)}^+ (M \otimes E) \\
      & =
      \PAREN*{M^T \otimes E^T} (U \otimes I_n) {(I_N \otimes A - \Lambda
      \otimes B)}^+ \PAREN*{\Lambda^2 \otimes I_n} \\
      & \qquad \cdot
      {(I_N \otimes A - \Lambda \otimes B)}^+ \PAREN*{U^T \otimes I_n} (M
      \otimes E) \\
      & =
      \PAREN*{M^T U \otimes E^T}
      \DIAG\PAREN*{0, \lambda_2^2 {(A - \lambda_2 B)}^{-2}, \ldots, \lambda_N^2
      {(A - \lambda_N B)}^{-2}} \\
      & \qquad \cdot
      (U^T M \otimes E) \\
      & =
      \PAREN*{M^T U \otimes I_r}
      \DIAG\PAREN*{0, {S_{\lambda_2}(0)}^T S_{\lambda_2}(0), \ldots,
      {S_{\lambda_N}(0)}^T S_{\lambda_N}(0)} \\
      & \qquad \cdot
      \PAREN*{U^T M \otimes I_r},
    \end{aligned}
  \end{equation}
  where $S_{\lambda}$ is again the transfer function of the auxiliary
  system~\eqref{eq:aux-system}. Recall that $\hat M = \PTP^{-1} P^T M$ and $U_1
  = P \PTP^{-\half} \hat U$. We now apply Lemma~\ref{LM:H0} to the transfer
  function~\eqref{eq:alt} of the system~\eqref{eq:reduced-system}:
  \begin{align*}
    \widehat{S}{(0)}^T \widehat{S}(0)
    & =
    \PAREN*{M^T P \PTP^{-\half} \otimes E^T} \PAREN[\big]{I_N \otimes A -
    \bar{L} \otimes B}^+ \\
    & \qquad \cdot
    \PAREN*{\PTP^{-\half} P^T L^2 P \PTP^{-\half} \otimes I_n} \\
    & \qquad \cdot
    \PAREN*{I_N \otimes A - \bar{L} \otimes B}^+ \PAREN*{\PTP^{-\half} P^T
    M \otimes E} \\
    & =
    \PAREN*{M^T P \PTP^{-\half} \otimes E^T} \PAREN[\big]{\hat{U} \otimes I_n}
    \PAREN[\big]{I_N \otimes A - \hat{\Lambda} \otimes B}^+ \\
    & \qquad \cdot
    \PAREN*{\hat{\Lambda}^2 \otimes I_n}
    \PAREN*{I_N \otimes A - \hat{\Lambda} \otimes B}^+ \\
    & \qquad \cdot
    \PAREN[\big]{\hat{U}^T \otimes I_n} \PAREN*{\PTP^{-\half} P^T M \otimes E}
    \\
    & =
    \PAREN*{M^T U_1 \otimes E^T} \\
    & \qquad \cdot
    \DIAG\PAREN*{0, \lambda_2^2 {(A - \lambda_2 B)}^{-2}, \ldots, \lambda_k^2
    {(A - \lambda_k B)}^{-2}} \\
    & \qquad \cdot
    \PAREN*{U_1^T M \otimes E} \\
    & =
    \PAREN*{M^T U_1 \otimes I_r} \\
    & \qquad \cdot
    \DIAG\PAREN*{0, {S_{\lambda_2}(0)}^T S_{\lambda_2}(0), \ldots,
    {S_{\lambda_k}(0)}^T S_{\lambda_k}(0)} \\
    & \qquad \cdot
    \PAREN*{U_1^T M \otimes I_r}.
  \end{align*}
  Combining the two expression above, it immediately follows that
  \begin{align*}
    {\Serr(0)}^T \Serr(0)
    &=
    {S(0)}^T S(0) - {\hat S}{(0)}^T \hat S(0) \\
    & =
    \PAREN*{M^T U_2 \otimes I_r} \\
    & \qquad \cdot
    \DIAG\PAREN*{{S_{\lambda_{k+1}}(0)}^T S_{\lambda_{k+1}}(0), \ldots,
    {S_{\lambda_N}(0)}^T S_{\lambda_N}(0)} \\
    & \qquad \cdot
    \PAREN*{U_2^T M \otimes I_r}.
  \end{align*}
  By taking $S_{\max, \cH_\infty}$ as defined by~\eqref{eq:Smax,infinity} it
  then holds that
  \begin{align*}
    {\Serr(0)}^T \Serr(0)
    & \preceq
    \PAREN*{M^T U_2 \otimes I_r}
    \DIAG(S_{\max, \cH_\infty}^2 I_r, \ldots, S_{\max, \cH_\infty}^2 I_r)
    \PAREN*{U_2^T M \otimes I_r} \\
    & =
    S_{\max, \cH_\infty}^2 \PAREN*{M^T U_2 U_2^T M \otimes I_r} \\
    & =
    S_{\max, \cH_\infty}^2 \PAREN*{M^T (I_N - U_1 U_1^T) M \otimes I_r} \\
    & =
    S_{\max, \cH_\infty}^2 \PAREN*{\PAREN*{I_m - M^T P \PTP^{-1} P^T
    M} \otimes I_r}.
  \end{align*}
  Continuing as in the proof of Theorem~\ref{TM:single_int_Hinf}, we find an
  upper bound for the $\cH_\infty$-error:
  \begin{equation*}
    \Hinfnorm{\Delta}^2 \leq S_{\max, \cH_\infty}^2 \lambda_{\max} \PAREN*{I_m -
    M^T P \PTP^{-1} P^T M}.
  \end{equation*}
  To compute an upper bound for the relative $\cH_\infty$-error, we bound the
  $\cH_\infty$-norm of system~\eqref{eq:extended-system} from below. Again, let
  $\bar U$ be such that $U = \begin{pmatrix} u_1 & \bar U \end{pmatrix}$ and let
  $S_{\min, \cH_\infty}$ be as defined by~\eqref{eq:Smin,infinity}.
  From~\eqref{eq:s0s0} it now follows that
  \begin{align*}
    {S(0)}^T S(0)
    & =
    \PAREN*{M^T \bar U \otimes I_r}
    \DIAG\PAREN*{{S_{\lambda_2}(0)}^T S_{\lambda_2}(0), \ldots,
    {S_{\lambda_N}(0)}^T S_{\lambda_N}(0)}
    \PAREN*{\bar U^T M \otimes I_r} \\
    & \geq
    \PAREN*{M^T \bar U \otimes I_r}
    \DIAG\PAREN*{S_{\min, \cH_\infty}^2 I_r, \ldots, S_{\min, \cH_\infty}^2 I_r}
    \PAREN*{\bar U^T M \otimes I_r} \\
    & =
    S_{\min, \cH_\infty}^2
    \PAREN*{M^T \bar U \bar U M \otimes I_r} \\
    & =
    S_{\min, \cH_\infty}^2
    \PAREN*{M^T \PAREN*{I_N - \frac{1}{N} \bbone_N \bbone_N^T} M \otimes I_r}.
  \end{align*}
  Again using Lemma~\ref{LM:H0}, we find a lower bound to the $\cH_\infty$-norm
  of $S$:
  \begin{equation*}
    \Hinfnorm{S}^2
    =
    \lambda_{\max} \PAREN*{{S(0)}^T S(0)} \geq S_{\min, \cH_\infty}^2,
  \end{equation*}
  which concludes the proof of our theorem.
\end{proof}

\section{Towards a priori error bounds for general graph
partitions}\label{sec:general}

Up to now, in this paper we have dealt with establishing a priori error bounds
for network reduction by clustering using AEPs of the network graph. Of
course, an important problem is to find error bounds for \emph{arbitrary},
possibly non almost equitable, partitions. In this section, we address this more
general problem. We restrict ourselves to the special case that the agents have
single integrator dynamics. Thus, we consider the multi-agent network
\begin{equation}\label{eq:original}
  \begin{aligned}
    \dot{x} & = -L x + M u, \\
    y & = L x.
  \end{aligned}
\end{equation}
As before, we assume that the underlying (undirected) graph $\graph$ is
connected, so that the network is synchronized. Assume $\pi = \{C_1, C_2,
\ldots, C_k\}$ is a graph partition, not necessarily an AEP, and let $P =
P(\pi) \in \mathbb{R}^{N \times k}$ be its characteristic matrix. As before, the
reduced order network is taken to be the Petrov-Galerkin projection
of~\eqref{eq:original}, and is represented by
\begin{equation}\label{eq:reduced}
  \begin{aligned}
    \dot{\widehat{x}} & = -\widehat{L} \widehat{x} + \widehat{M} u, \\
    \widehat{y} & = L P \widehat{x},
  \end{aligned}
\end{equation}
Again, let $S$ and $\hat S$ be the transfer functions of~\eqref{eq:original}
and~\eqref{eq:reduced}, respectively. We address the problem of obtaining a
priori upper bounds for $\Htwonorm[\big]{S - \widehat{S}}$ and
$\Hinfnorm[\big]{S - \widehat{S}}$.

The idea for establishing such upper bounds is as follows: as a first step we
will approximate the original Laplacian matrix $L$ (of the original network
graph $\graph$) by a new Laplacian matrix, denoted by $L_\aeptxt$ (corresponding
to a `nearby' graph $\graph_\aeptxt$) such that the given partition $\pi$ is
an AEP with respect to this new graph $\graph_\aeptxt$. This new graph
$\graph_\aeptxt$ defines a new multi-agent system with transfer function
$S_\aeptxt(s) = L_\aeptxt {(s I + L_\aeptxt)}^{-1} M$. The reduced order network
of $S_\aeptxt$ (using the AEP $\pi$) has transfer function
$\widehat{S}_\aeptxt(s) = L_\aeptxt P \PAREN[\big]{s I +
\widehat{L}_\aeptxt}^{-1} \widehat{M}$. Then using the triangle inequality both
for $p = 2$ and $p = \infty$ we have
\begin{equation}\label{eq:triangle}
  \begin{aligned}
    \Hpnorm[\big]{S - \widehat{S}}
    & =
    \Hpnorm[\big]{S - S_\aeptxt + S_\aeptxt - \widehat{S}_\aeptxt +
    \widehat{S}_\aeptxt - \widehat{S}}
    \\
    & \le
    \Hpnorm{S - S_\aeptxt} + \Hpnorm[\big]{S_\aeptxt - \widehat{S}_\aeptxt} +
    \Hpnorm[\big]{\widehat{S}_\aeptxt - \widehat{S}}.
  \end{aligned}
\end{equation}
The idea is to obtain a priori upper bounds for all three terms
in~\eqref{eq:triangle}. We first propose an approximating Laplacian matrix
$L_\aeptxt$, and subsequently study the problems of establishing upper bounds
for the three terms in~\eqref{eq:triangle} separately.

For a given matrix $M$, let $\Fnorm{M} := \TR\PAREN*{M^T M}^\half$ denote its
Frobenius norm. In the following, denote $\cP := P \PTP^{-1} P^T$. Note that
$\cP$ is the orthogonal projector onto $\IM P$. As approximation for $L$, we
compute the unique solution to the convex optimization problem
\begin{equation}\label{eq:optimization-problem}
  \begin{aligned}
    \MIN_{L_\aeptxt} \quad & \Fnorm{L - L_\aeptxt}^2, \\
    \textnormal{subject to} \quad & (I_N - \cP) L_\aeptxt P = 0, \\
    & L_\aeptxt = L_\aeptxt^T, \\
    & L_\aeptxt \geq 0, \\
    & L_\aeptxt \bbone_N = 0.
  \end{aligned}
\end{equation}
In other words, we want to compute a positive semi-definite matrix $L_\aeptxt$
with row sums equal to zero, and with the property that $\IM P$ is invariant
under $L_\aeptxt$ (equivalently, the given partition $\pi$ is an AEP for the
new graph). We will show that such $L_\aeptxt$ may correspond to an undirected
graph \emph{with negative weights}. However, it is constrained to be positive
semi-definite, so the results of Sections~\ref{sec:partitions}, \ref{sec:H_2},
and~\ref{sec:H_inf} in this paper will remain valid.
\begin{TM}
  The matrix $L_\aeptxt := \cP L \cP + (I_N - \cP) L (I_N - \cP)$ is the unique
  solution to the convex optimization
  problem~\eqref{eq:optimization-problem}. If $L$ corresponds to a connected
  graph, then, in fact, $\KER{L_\aeptxt} = \IM \bbone_N$.
\end{TM}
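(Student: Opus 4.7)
The plan is to treat the problem as a projection problem in $\R^{N\times N}$ equipped with the Frobenius inner product, and exploit the orthogonal decomposition $\R^N = \IM P \oplus (\IM P)^{\perp}$ induced by the orthogonal projector $\cP = P\PTP^{-1}P^T$.

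I would first verify that the proposed matrix $L_\aeptxt^\star := \cP L \cP + (I_N - \cP)L(I_N - \cP)$ is feasible. Symmetry is obvious since $L$ and $\cP$ are symmetric. The invariance constraint $(I_N-\cP)L_\aeptxt^\star P = 0$ holds because $(I_N-\cP)\cP = 0$ and $(I_N-\cP)P = 0$ (as $\cP P = P$). Positive semi-definiteness is immediate since $\cP L \cP \succeq 0$ and $(I_N-\cP)L(I_N-\cP) \succeq 0$ whenever $L \succeq 0$. For the row-sum constraint, note that $\bbone_N = P \bbone_k$ lies in $\IM P$, hence $\cP\bbone_N = \bbone_N$ and $(I_N-\cP)\bbone_N = 0$; combined with $L\bbone_N = 0$, both summands annihilate $\bbone_N$.

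For optimality I would argue as follows. The invariance constraint $(I_N-\cP)L_\aeptxt P = 0$ combined with symmetry forces the cross-blocks of $L_\aeptxt$ to vanish, i.e.\ $\cP L_\aeptxt (I_N-\cP) = 0 = (I_N-\cP) L_\aeptxt \cP$, so every feasible $L_\aeptxt$ decomposes as $L_\aeptxt = \cP L_\aeptxt \cP + (I_N-\cP) L_\aeptxt (I_N-\cP)$. Writing $L - L_\aeptxt$ as a $2\times 2$ block matrix with respect to the orthogonal splitting $\IM\cP \oplus \IM(I_N-\cP)$ and using that the Frobenius norm squared equals the sum of the squared Frobenius norms of the blocks, we obtain
\begin{equation*}
  \Fnorm{L - L_\aeptxt}^2
  =
  \Fnorm{\cP(L - L_\aeptxt)\cP}^2
  + \Fnorm{\cP L(I_N-\cP)}^2
  + \Fnorm{(I_N-\cP)L\cP}^2
  + \Fnorm{(I_N-\cP)(L-L_\aeptxt)(I_N-\cP)}^2.
\end{equation*}
The two cross terms do not depend on $L_\aeptxt$, and the two diagonal terms are minimized (to zero) precisely by the choice $\cP L_\aeptxt \cP = \cP L \cP$ and $(I_N-\cP)L_\aeptxt(I_N-\cP) = (I_N-\cP)L(I_N-\cP)$. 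Hence $L_\aeptxt^\star$ is the unique minimizer among all \emph{symmetric} matrices satisfying the invariance condition; since this minimizer happens to lie in the smaller feasible set cut out by the PSD and row-sum constraints (as checked above), it is the unique solution of~\eqref{eq:optimization-problem}. Uniqueness over the full convex feasible set is also forced by strict convexity of the Frobenius norm squared.

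For the kernel characterization, suppose $L_\aeptxt^\star x = 0$. Then
\begin{equation*}
  0 = x^T L_\aeptxt^\star x = (\cP x)^T L (\cP x) + ((I_N-\cP)x)^T L ((I_N-\cP)x),
\end{equation*}
and since $L \succeq 0$ each summand vanishes, which gives $L \cP x = 0$ and $L(I_N-\cP)x = 0$. Connectedness of $\graph$ means $\KER L = \IM \bbone_N$, so $\cP x = \alpha \bbone_N$ and $(I_N-\cP)x = \beta \bbone_N$ for some scalars $\alpha,\beta$. But $\bbone_N \in \IM P = \IM \cP$, while $(I_N-\cP)x \in (\IM P)^\perp$; the intersection is trivial, so $\beta = 0$ and $x = \cP x = \alpha \bbone_N$. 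The reverse inclusion $\IM \bbone_N \subseteq \KER L_\aeptxt^\star$ was already checked as part of feasibility.

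The main obstacle I expect is the orthogonal block decomposition of $\Fnorm{L - L_\aeptxt}^2$: one has to first establish that the invariance constraint, together with symmetry, actually forces $L_\aeptxt$ to be block-diagonal with respect to $\cP$ and $I_N - \cP$, which is what decouples the optimization into two independent problems and lets us read off the minimizer directly. Once that structural fact is in hand, both optimality and the kernel computation reduce to straightforward consequences of $L \succeq 0$ and $\bbone_N \in \IM P$.
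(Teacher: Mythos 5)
Your proof is correct and follows essentially the same route as the paper: you verify feasibility of the candidate, exploit the fact that the invariance constraint together with symmetry forces every feasible matrix to be block-diagonal with respect to the orthogonal splitting $\IM\cP\oplus\IM(I_N-\cP)$, and conclude by a Pythagorean decomposition of the Frobenius norm (the paper phrases this as the vanishing of the cross term $\TR((L-L_\aeptxt)\Delta)$, which is the same orthogonality). Your kernel argument matches the paper's as well, and in fact spells out more explicitly why $(I_N-\cP)x\in\IM\bbone_N$ forces $(I_N-\cP)x=0$.
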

\begin{proof}
  Clearly, $L_\aeptxt$ is symmetric and positive semi-definite since $L$ is.
  Also, $(I_N - \cP) L_\aeptxt P = 0$ since $(I_N - \cP) P = 0$. It is also
  obvious that $L_\aeptxt \bbone_N = 0$ since $\cP \bbone_N = \bbone_N$. We now
  show that $L_\aeptxt$ uniquely minimizes the distance to $L$. Let $X$ satisfy
  the constraints and define $\Delta = L_\aeptxt - X$. Then we have
  \[
    \Fnorm{L - X}^2
    = \Fnorm{L - L_\aeptxt}^2 + \Fnorm{\Delta}^2
    + 2 \TR\PAREN*{(L - L_\aeptxt) \Delta}.
  \]
  It can be verified that $L - L_\aeptxt = (I_N - \cP) L \cP + \cP L (I_N -
  \cP)$. Thus,
  \begin{align*}
    \TR\PAREN*{(L - L_\aeptxt) \Delta}
    = \TR\PAREN*{(I_N - \cP) L \cP \Delta}
    + \TR\PAREN*{\cP L (I_N - \cP) \Delta}.
  \end{align*}
  Now, since both $X$ and $L_\aeptxt$ satisfy the
  first constraint, we have $(I_N - \cP) \Delta \cP = 0$. Using this we have
  \begin{align*}
    \TR\PAREN*{(I_N - \cP) L \cP \Delta}
    = \TR\PAREN*{\cP \Delta (I_N - \cP) L}
    = \TR\PAREN*{L (I_N - \cP) \Delta \cP}
    = 0.
  \end{align*}
  Also,
  \begin{align*}
    \TR\PAREN*{\cP L (I_N - \cP) \Delta}
    = \TR\PAREN*{L (I_N - \cP) \Delta \cP}
    = 0.
  \end{align*}
  Thus, we obtain
  \begin{equation*}
    \Fnorm{L - X}^2
    = \Fnorm{L - L_\aeptxt}^2 + \Fnorm{\Delta}^2,
  \end{equation*}
  from which it follows that $\Fnorm{L - X}$ is minimal if and only if $\Delta =
  0$, equivalently $X = L_\aeptxt$.

  To prove the second statement, let $x \in \KER L_\aeptxt$, so $x^T L_\aeptxt x
  = 0$. Then both $x^T \cP L \cP x = 0$ and $x^T (I_N - \cP) L (I_N - \cP) x =
  0$. This clearly implies $L \cP x = 0$ and $L (I_N - \cP) x = 0$. Since $L$
  corresponds to a connected graph we must have $\cP x \in \IM \bbone_N$ and
  $(I_N - \cP) x \in \IM \bbone_N$. We conclude that $x \in \IM \bbone_N$ as
  desired.
\end{proof}
As announced above, $L_\aeptxt$ may have positive off-diagonal elements,
corresponding to a graph with some of its edge weights being negative. For
example, for
\begin{align*}
  L & =
  \begin{pmatrix}
    1 & -1 & 0 & 0 & 0 \\
    -1 & 2 & -1 & 0 & 0 \\
    0 & -1 & 2 & -1 & 0 \\
    0 & 0 & -1 & 2 & -1 \\
    0 & 0 & 0 & -1 & 1
  \end{pmatrix}, \quad
  P =
  \begin{pmatrix}
    1 & 0 \\
    1 & 0 \\
    1 & 0 \\
    0 & 1 \\
    0 & 1
  \end{pmatrix}
\end{align*}
we have
\begin{align*}
  L_\aeptxt & =
  \begin{pmatrix}
    \frac{11}{9} & -\frac{7}{9} & -\frac{1}{9} & 0 & -\frac{1}{3} \\
    -\frac{7}{9} & \frac{20}{9} & -\frac{10}{9} & 0 & -\frac{1}{3} \\
    -\frac{1}{9} & -\frac{10}{9} & \frac{14}{9} & -\half & \frac{1}{6} \\
    0 & 0 & -\half & \frac{3}{2} & -1 \\
    -\frac{1}{3} & -\frac{1}{3} & \frac{1}{6} & -1 & \frac{3}{2}
  \end{pmatrix},
\end{align*}
so the edge between nodes $3$ and $5$ has a negative weight.
Figure~\ref{fig:L_AEP_example} shows the graphs corresponding to $L$ and
$L_\aeptxt$.
\begin{figure}[tb]
  \centering
  \begin{tikzpicture}
    \foreach \name/\pos in {{1/(0, 0)}, {2/(2, 0)}, {3/(4, 0)}, {4/(3, -2)},
    {5/(1, -2)}}
      \node[shape=circle, ball color=black!5, inner sep=2pt, outer sep=0pt,
      minimum size=16pt] (\name) at \pos {$\name$};

    \foreach \src/\dest in {{1/2}, {2/3}, {3/4}, {4/5}}
      \draw (\src) -- node[fill=white, inner sep=2pt, font=\footnotesize] {$1$}
      (\dest);

    \begin{scope}[shift={(6.5, 0)}]
      \foreach \name/\pos in {{1/(0, 0)}, {2/(2, 0)}, {3/(4, 0)}, {4/(3, -2)},
      {5/(1, -2)}}
        \node[shape=circle, ball color=black!5, inner sep=2pt, outer sep=0pt,
        minimum size=16pt] (\name) at \pos {$\name$};

      \foreach \src/\dest/\weight in {{1/2/$\frac{7}{9}$}, {1/5/$\frac{1}{3}$},
      {2/3/$\frac{10}{9}$}, {2/5/$\frac{1}{3}$}, {3/4/$\half$},
      {3/5/$-\frac{1}{6}$}, {4/5/$1$}}
        \draw (\src) -- node[fill=white, inner sep=2pt, font=\footnotesize]
        {\weight} (\dest);
      \draw (1) edge[bend left=45] node[fill=white, inner sep=2pt,
      font=\footnotesize] {$\frac{1}{9}$} (3);
    \end{scope}
  \end{tikzpicture}
  \caption{Example}\label{fig:L_AEP_example}
\end{figure}
Although $L_\aeptxt$ is not necessarily a Laplacian matrix with only nonpositive
off-diagonal elements, it has all the properties we associate with a Laplacian
matrix. Specifically, it can be checked that all results in this paper remain
valid, since they only depend on the symmetric positive semi-definiteness of the
Laplacian matrix.

Using the approximating Laplacian $L_\aeptxt = \cP L \cP + (I_N - \cP) L (I_N -
\cP)$ as above, we will now deal with establishing upper bounds for the three
terms in~\eqref{eq:triangle}. We start off with the middle term
$\Hpnorm[\big]{S_\aeptxt - \widehat{S}_\aeptxt}$ in~\eqref{eq:triangle}.

According to Remark~\ref{rem:single integrator case}, for $p = 2$ this term has
an upper bound depending on the maximal eigenvalue of $L_\aeptxt$ that is not an
eigenvalue of $\widehat{L}_\aeptxt$, on the minimal nonzero eigenvalue of
$L_\aeptxt$, and on the number of cellmates of the leaders with respect to the
partitioning $\pi$.

For $p = \infty$, in Theorem~\ref{TM:single_int_Hinf} this term was expressed in
terms of the maximal number of cellmates with respect to the partitioning $\pi$
(noting that it is equal to $1$ in case two or more leaders share the same
cell).

Next, we will take a look at the first and third term in~\eqref{eq:triangle},
i.e.\ $\Hpnorm{S - S_\aeptxt}$ and $\Hpnorm[\big]{\widehat{S} -
\widehat{S}_\aeptxt}$. Let us denote $\Delta L = L - L_\aeptxt$. We find
\begin{align*}
  S(s) - S_\aeptxt(s)
  & =
  L {(s I + L)}^{-1} M - L_\aeptxt {(s I + L_\aeptxt)}^{-1} M
  \\
  & =
  L {(s I + L)}^{-1} M
  \\
  & \quad -
  L_\aeptxt \BRACK*{{(s I + L)}^{-1} + {(s I + L_\aeptxt)}^{-1} \Delta L {(s I +
  L)}^{-1}} M \\
  & =
  L {(s I + L)}^{-1} M - L_\aeptxt {(s I + L)}^{-1} M \\
  & \quad -
  L_\aeptxt {(s I + L_\aeptxt)}^{-1} \Delta L {(s I + L)}^{-1} M \\
  & =
  \Delta L {(s I + L)}^{-1} M - L_\aeptxt {(s I + L_\aeptxt)}^{-1} \Delta L {(s
  I + L)}^{-1} M \\
  & =
  \BRACK*{I_N - L_\aeptxt {(s I + L_\aeptxt)}^{-1}} \Delta L {(s I + L)}^{-1} M.
\end{align*}
Thus, for $p = 2$ and $p = \infty$ we have
\begin{equation}\label{eq:first}
  \begin{aligned}
    \Hpnorm{S - S_\aeptxt}
    & \le
    \Hinfnorm*{I_N - L_\aeptxt {(s I + L_\aeptxt)}^{-1}}
    \Hpnorm*{\Delta L {(s I + L)}^{-1} M} \\
    & \le
    2 \Hpnorm*{\Delta L {(s I + L)}^{-1} M}.
  \end{aligned}
\end{equation}
It is also easily seen that $\widehat{L}_\aeptxt = \PTP^{-1} P^T L_\aeptxt P =
\PTP^{-1} P^T L P = \widehat{L}$ and $L_\aeptxt P = P \PTP^{-1} P^T L P = P
\widehat{L}$. Therefore,
\begin{align*}
  \widehat{S}(s) - \widehat{S}_\aeptxt(s)
  & =
  L P \PAREN[\big]{s I + \widehat{L}}^{-1} \widehat{M} - L_\aeptxt P
  \PAREN[\big]{s I + \widehat{L}_\aeptxt}^{-1} \widehat{M} \\
  & =
  L P \PAREN[\big]{s I + \widehat{L}}^{-1} \widehat{M} - P \widehat{L}
  \PAREN[\big]{s I + \widehat{L}}^{-1} \widehat{M} \\
  & =
  \PAREN[\big]{L P - P \widehat{L}} \PAREN[\big]{s I + \widehat{L}}^{-1}
  \widehat{M}.
\end{align*}
Since, finally, ${(LP -P\widehat{L})}^T (LP -P\widehat{L}) = P^T {(\Delta L)}^2
P$, for $p = 2$ and $p = \infty$ we obtain
\begin{equation} \label{eq:third}
  \begin{aligned}
    \Hpnorm[\big]{\widehat{S} - \widehat{S}_\aeptxt}
    \le
    \Hpnorm*{\Delta L P{(s I + \widehat{L})}^{-1} \widehat{M}}.
  \end{aligned}
\end{equation}
Thus, both in~\eqref{eq:first} and~\eqref{eq:third} the upper bound involves the
difference $\Delta L = L - L_\aeptxt$ between the original Laplacian and its
optimal approximation in the set of Laplacian matrices for which the given
partition $\pi$ is an AEP. In a sense, the difference $\Delta L$ measures
how far $\pi$ is away from being an AEP for the original graph $\graph$.
Obviously, $\Delta L = 0$ if and only if $\pi$ is an AEP for $\graph$. In
that case only the middle term in~\eqref{eq:triangle} is present.

\section{Conclusions}\label{sec:conclusions}

In this paper, we have extended results on model reduction of leader-follower
networks with single integrator agent dynamics to leader-follower networks with
arbitrary linear multivariable agent dynamics. The proposed model reduction
technique reduces the complexity of the network topology by clustering the
agents according to a special class of graph partitions called almost equitable
partitions. We have shown that if the original undirected network is reduced by
means of a specific Petrov-Galerkin projection associated with such graph
partition, then the resulting reduced order model can be interpreted as a
networked multi-agent system with a weighted, directed network graph. If the
original network is clustered according to an almost equitable partition, then
its consensus properties are preserved. We have provided a priori upper bounds
on the $\cH_2$ and $\cH_\infty$ model reduction errors in this case. These error
bounds depend on an auxiliary system closely related to the agent dynamics, the
eigenvalues of the Laplacian matrices of the original and the reduced network,
and on the number of cellmates of the leaders in the network. Finally, we have
provided some insight into the general case of clustering according to
arbitrary, not necessarily almost equitable, partitions. Here, direct
computation of a priori upper bounds on the error is not as straightforward as
in the case of almost equitable partitions. We have shown that in this more
general case one can bound the model reduction errors by first optimally
approximating the original network by a new network for which the chosen
partition is almost equitable, and then bounding the $\cH_2$ and $\cH_\infty$
errors using the triangle inequality.

\bibliographystyle{abbrv}
\bibliography{root}

\end{document}